\documentclass[12pt,a4paper]{amsart}
\usepackage{pst-node}
\usepackage{tikz}
\usetikzlibrary{calc, arrows, decorations.markings}
\usetikzlibrary{matrix,arrows}
\usepackage{blindtext}
\usepackage{graphicx} % for \rotatebox

\usepackage[mathscr]{eucal}
\usepackage{amssymb,amsmath,scalefnt}
\usepackage{mathrsfs}
\usepackage{amsfonts}
\usepackage[all]{xy}%\usepackage{tikz-cd}
\usepackage{xypic,color}
\usepackage{a4}
\DeclareMathAlphabet{\mathpzc}{OT1}{pzc}{m}{it}

\textheight24cm
\topmargin0mm
\oddsidemargin0mm
\evensidemargin0mm
\textwidth16cm
\parskip=.5\baselineskip

\newtheorem{proposition}{\sc Proposition}[section]
\newtheorem{lemma}[proposition]{\sc Lemma}
\newtheorem{corollary}[proposition]{\sc Corollary}
\newtheorem{theorem}[proposition]{\sc Theorem}

\theoremstyle{definition}

\theoremstyle{remark}

\renewcommand{\[}{\begin{equation}}
\renewcommand{\]}{\end{equation}}

\begin{document}
\baselineskip=15pt
%\vspace*{-15mm}
\title[The cyclic-homology Chern-Weil homomorphism  for principal coactions]{The cyclic-homology Chern-Weil homomorphism\\  
for principal coactions}
\author{Piotr~M.~Hajac}
\address{Instytut Matematyczny, Polska Akademia Nauk, ul.~\'Sniadeckich 8, 00--656 Warszawa, Poland}
\email{pmh@impan.pl}
\author{Tomasz Maszczyk}
\address{Instytut Matematyki, 
     Uniwersytet Warszawski,
ul.\ Banacha 2,
02--097 Warszawa, Poland}
\email{t.maszczyk@uw.edu.pl}
\begin{abstract}\normalsize
We view the space of cotraces in the structural coalgebra of a principal coaction as a noncommutative counterpart of the classical Cartan model.
Then we define  the  cyclic-homology Chern-Weil 
homomorphism  by extending the Chern-Galois character from the characters of finite-dimensional comodules  to arbitrary cotraces.
To reduce the  cyclic-homology Chern-Weil 
homomorphism to a tautological natural transformation, we
replace  the unital coaction-invariant subalgebra by its certain natural
H-unital nilpotent extension (row extension), and prove that their cyclic-homology groups are isomorphic.
In the proof, we use a chain homotopy invariance of complexes computing Hochschild, and hence cyclic homology,
for arbitrary row extensions. In the  context of the cyclic-homology Chern-Weil 
homomorphism, a row extension is provided by the Ehresmann-Schauenburg quantum groupoid with a nonstandard multiplication. 
\color{black}
\end{abstract}
\maketitle
\vspace*{-10mm}
{\tableofcontents}

\section{Introduction}

 A formula computing the Chern character of a finitely generated projective  module associated with a given finite-dimensional representation 
\cite{bh04} uses a strong connection \cite{h-pm96,bh04} and multiple comultiplication applied to the character of
representation  to produce a  cycle in the 
complex computing cyclic homology of the algebra of invariants. Its homology class is called the \emph{Chern-Galois character} \cite{bh04} of
representation. It  is a 
fundamental tool in calculating $K_0$-invariants of modules associated to principal coactions of coalgebras on algebras, in particular to principal 
comodule algebras  in Hopf-Galois theory \cite{hkmz11}.

The goal of this  paper is to factorize the Chern-Galois character through a noncommutative Chern-Weil homomorphism
taking values in  a model of cyclic homology reducing the homomorphism to a tautological natural transformation.
First, we achieve a natural factorization of the Chern-Galois character by replacing the unital coaction-invariant subalgebra by its certain natural
H-unital nilpotent extension, which we call a \emph{row extension}. Next, we observe that
 the Chern-Galois character extends from the characters of finite-dimensional comodules to arbitrary cotraces 
while still producing elements of the
cyclic homology of the row extension
stable under Connes' periodicity operator. Since the space of cotraces can be viewed as a noncommutative replacement of the Cartan model, 
and the cyclic homology of the row extension turns out to be isomorphic to the cyclic homology 
of the unital coaction-invariant subalgebra (playing the role of the de Rham cohomology of the base space), 
we  interpret this extension as a cyclic-homology counterpart of the
classical \emph{Chern-Weil homomorphism}.
Although our Chern-Weil homomorphism can be  obtained simply as an extension of the Chern-Galois character to all cotraces
without referring to row extensions, we
need the row-extension model of  the cyclic homology of the base-space algebra to manifest the Chern-Weil
homomorphism as a tautological natural transformation.

An abstract argument used to achieve the above goal can also be applied to matrix projections to produce
 the Chern character from K-theory to cyclic homology. Both cases are instances of a common construction we call \emph{abstract Chern-type character}. 

Another remarkable common feature of these two constructions is that they both can be defined in a tautological way in terms of a canonical block- matrix H-unital algebra. In the well-known Chern-character  case, it is an algebra of infinite matrices with entries in a given algebra. In our Chern-Weil case this  H-unital algebra is a specific Hochschild extension of an 
algebra coming from a module equipped with a module map to the algebra, which we call augmentation. We prove that  every such an extension is 
isomorphic to to the block-matrix algebra whose the only possibly nonzero row consists of the algebra itself followed by the kernel of the 
augmentation. We call the latter \emph{row extensions} and prove that the Hochschild homology is invariant under such extensions by providing an 
explicit homotopy equivalence of complexes. 

In the case of faithfully flat Hopf-Galois extensions, the corresponding augmented module is the Ehresmann-Schauenburg quantum groupoid with 
the augmentation being its counit. The kernel of the augmentation is therefore equal to invariant universal noncommutative differential forms. This 
determines the canonical block-matrix structure of that row extension completely.

The fact that the space  of cotraces could be understood as a \emph{cyclic-homology Cartan 
model} of a conjectural cyclic homology of the classifying space of the coalgebra $C$ 
we justify by the graded-space construction associated  with the Ad-invariant $\mathfrak{m}$-adic filtration on class functions, which  produces the classical space of Ad-invariant polynomials on the Lie algebra.   

 Since also the abelian group completion ${\rm Rep}(C)$ of the monoid of finite dimensional $C$-comodules could be understood as a conjectural  
$K_{0}$-group of the classifying space of the coalgebra $C$, the formula for the Chern-Galois character from \cite{bh04} could be 
understood as conjectural naturality of the Chern character under the classifying map for a noncommutative principal bundle corresponding 
to a principal $C$-Galois extension $B\subseteq A$. All this can be subsumed by the following commutative diagram 
 $$\xymatrixcolsep{4em}\xymatrix{
{\rm Rep}(C) \ar[d]_{\chi} \ar[r]^-{[A\Box^{C}(-)]} &{\rm K}_{0}(B)\ar[d]^{{\rm ch}_{n}}\\
C^{\rm tr} \ar[r]^-{{\rm chw}_{n}} &{\rm HC}_{2n}(B)},
$$
where the map $[A\Box^{C}(-)]$ associating a finitely generated projective module with a given representation should be understood as the map induced by a classifying map on K-theory, the character $\chi$ of a representation should be understood as the Chern character for the classifying space and the cyclic Chern-Weil map ${\rm chw}$ should be understood as the map induced by a classifying map on cyclic homology. The Chern-Galois is the diagonal composite in this diagram.

The above commutative diagram can be understood as a noncommutative counterpart of naturality of the Chern character under the classifying map ${\rm cl}: Y\rightarrow BG$ of a $G$-principal bundle $X\rightarrow  Y$ corresponding to a principal $G$-action $X\times G\rightarrow  X$ with the space of orbits $Y=X/G$, tantamount to commutativity of the following diagram
 $$\xymatrixcolsep{5em}\xymatrix{
{\rm K}^{0}(BG)\ar[d]_{{\rm ch}_{n}(BG)} \ar[r]^-{{\rm K}^{0}({\rm cl})} &{\rm K}^{0}(Y)\ar[d]^{{\rm ch}_{n}(Y)}\\
{\rm H}^{2n}(BG) \ar[r]^-{{\rm H}^{2n}({\rm cl})} &{\rm H}^{2n}(Y). }
$$  

The above analogy between the role of block-matrix $H$-unital algebras in the construction of both Chern and Chern-Galois characters can be subsumed in the following commutative diagram

 $$\xymatrixcolsep{4pc}\xymatrix{
{\rm Rep}(C) \ar[dd]_{\widetilde{\rm chg}_{n}(\ell)}\ar[ddr]^{{\rm chg}_{n}(\ell)} \ar[rr]^{[A\Box^{C}(-)]}& &{\rm K}_{0}(B)\ar[dd]^{\widetilde{\rm ch}_{n}}\ar[ddl]_{{\rm ch}_{n}}\\ \\
{\rm HC}_{2n}(M) \ar[r]^{{\rm HC}_{2n}(\varepsilon)}_{\cong}&{\rm HC}_{2n}(B) &{\rm HC}_{2n}({\rm M}_{\infty}(B)).\ar[l]^{\cong}_{[{\rm tr}_{2n}]}}$$

Here the bottom horizontal arrows are isomorphisms of block-matrix $H$-unital models of cyclic homology of $B$ and vertical arrows are tautological constructions. Of course the right side factorization of the Chern character is very well known, and provides an analogy to the left           
side factorization of the Chern-Galois character. It should be stressed that the Chern-Galois character on representations and the Chern character on K-theory are abstract cyclic-homology Chern-type characters  for completely different reasons. Therefore it is a quite remarkable fact that there exists a construction \cite{bh04} of a matrix idempotent representing an associated finitely generated projective $B$-module out of a given representation and the strong connection relating these two constructions.

Another aspect of our construction consists in the fact that we work with complexes up to chain homotopy equivalence rather than with homology classes. It is motivated by the fact that  
although on the theoretical level the aforementioned invariance of Hochschild homology under row extensions can be established by the Wodzicki excision argument \cite{wodz88, wodz89}, a problem of making this argument  explicit in the resulting inverse excision isomorphism, as signalled in \cite{braun}, arises. We overcome this difficulty by constructing an explicit  homotopy compatible with an analogue of the filtration from \cite{gu-gu-96}, providing a homotopy equivalence of corresponding complexes. The fact that  all homotopies we use are natural and explicit suggests a higher homotopy landscape behind our construction, according to the ideas surveyed in \cite{hub}.  Our Lemma \ref{main lemma}, replacing here the Homological Perturbation Theory evoked in \cite{hub}, could be of independent interest. Similarly as Homological Perturbation Theory  is used as a  tool in computing Hochschild and cyclic homology and the Chern character \cite{lam, alv-arm-re-sil, kass}, we use our Lemma \ref{main lemma} in calculations in the homotopy category of chain complexes. An additional substantiation of homotopical approach is the fact that it is a natural environment for the classical Chern-Weil theory~\cite{free-hop}.           

To put our construction in historical perspective, let us recall other approaches to the Chern-Weil map in noncommutative geometry and compare them with ours. As it seems, the first instance of a connection between cotraces and Chern-Weil theory goes back to Quillen's work \cite{qui}. Although the coalgebra there is the bar construction of an algebra, the analogy with the Chern-Weil homomorphism is explicitly stressed therein. 

Next, in \cite{alex-mein, mein} Alexeev and Meinrenken introduced  noncommutative Chern-Weil theory based on a  specific noncommutative deformation of the classical Weil model aiming to extend the Duflo isomorphism for quadratic Lie algebras to the level of equivariant cohomology. However, instead arbitrary Hopf algebras they work only with the universal enveloping algebra of a Lie algebra, and without referring to cyclic homology.   

In \cite{crai02} Crainic considers  a Weil model in the context of Hopf-cyclic homology of Hopf algebras. However, his characteristic map based on the characteristic map of Connes and Moscovici takes values in the cyclic homology of a Hopf-module algebra  instead of the cyclic homology of the algebra of coaction invariants. As such, it cannot be a noncommutative counterpart of the classical Chern-Weil homomorphism.

\section{Homotopy category of chain complexes}

\subsection{Killing contractible complexes} The following lemma should have been proved sixty years ago. Strangely enough, the first approximation to it can be found in Loday's book without any further reference, under the name ``Killing contractible complexes'' \cite{lod}. Regretfully, the claim there is about a quasiisomorphism only instead of homotopy equivalence. Moreover, that quasiisomorphism doesn't respect the obvious structure of the short exact sequence of complexes. The homotopy equivalence was  achieved by Crainic only in 2004 \cite{crai} by constructing the explicit  homotopy inverse with use of the homological perturbation method. Still, his perturbed maps don't respect the obvious structure of the short exact sequence of complexes.  In contrast to these results, in our present approach we perturb neither the differential, nor the structure of the short exact sequence. Instead, we perturb a given splitting in the category of graded objects  to make it a splitting in the category of complexes, providing an explicit homotopy inverse. We focus on split short exact sequences of complexes since only those can produce distinguished triangles in the homotopy category of chain complexes.  
\begin{lemma}\label{main lemma}
Assume that    
$$\xymatrix{0 \ar[r] &X\ar[r]^{\iota} & Y \ar[r]^{\pi}&Z\ar[r]& 0}$$
is a short exact sequence of complexes in an abelian category split in the category of graded objects. Provided  $X$ is contractible, $\pi$ is a homotopy equivalence. 
\end{lemma}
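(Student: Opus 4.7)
The plan is to keep $\iota$, $\pi$, and the differentials untouched and to concentrate the whole perturbation in the choice of a splitting of $\pi$. A graded splitting of the given short exact sequence provides degree-zero morphisms $\sigma\colon Z\to Y$ and $\rho\colon Y\to X$ satisfying $\rho\iota=\mathrm{id}_X$, $\pi\sigma=\mathrm{id}_Z$, and $\iota\rho+\sigma\pi=\mathrm{id}_Y$. First I would record the obstruction to $\sigma$ being a chain map by the degree-$(+1)$ map $\alpha:=d_Y\sigma-\sigma d_Z$. Because $\pi\alpha=0$, it factors through $\iota$ as $\alpha=\iota\beta$ for a unique $\beta\colon Z\to X$, and a short computation using $d_Y^2=0$ shows that $\beta$ anticommutes with the differentials, so it is a chain map of degree $+1$. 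The same bookkeeping, applied to $\rho$, yields the dual identity $\rho d_Y-d_X\rho=\beta\pi$, which will be needed later.

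Next I would upgrade the graded section $\sigma$ to a chain section using contractibility of $X$. Let $s\colon X\to X$ be a contracting homotopy, $d_Xs+sd_X=\mathrm{id}_X$. The natural candidate is $\tilde\sigma:=\sigma-\iota s\beta$. Since $\pi\iota=0$, we still have $\pi\tilde\sigma=\mathrm{id}_Z$, so $\tilde\sigma$ remains a graded section of $\pi$. That $\tilde\sigma$ is a chain map follows by a direct calculation: the defect $d_Y\tilde\sigma-\tilde\sigma d_Z$ collapses to $\iota\bigl(\beta-(d_Xs+sd_X)\beta\bigr)=0$ once one uses $\beta d_Z=-d_X\beta$ and the contracting identity. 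Thus $\pi$ admits an honest section in the category of complexes.

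It remains to exhibit a chain homotopy from $\tilde\sigma\pi$ to $\mathrm{id}_Y$. Mirroring the perturbation of $\sigma$, the natural guess is $H:=\iota s\rho\colon Y\to Y$. Expanding $d_YH+Hd_Y$ with the help of $\rho d_Y=d_X\rho+\beta\pi$ and the contracting identity gives $\iota\rho+\iota s\beta\pi$, which equals $\mathrm{id}_Y-\tilde\sigma\pi$ by the graded splitting identity. This proves that $\pi$ is a homotopy equivalence with homotopy inverse $\tilde\sigma$. The main obstacle is conceptual rather than computational: one must resist the temptation, standard in the Wodzicki-excision or Loday-style approaches, to perturb $\iota$, $\pi$, or the differentials, and instead discipline oneself to modify only the graded splitting; once this is accepted, the formulas for $\tilde\sigma$ and $H$ essentially write themselves from the contracting homotopy $s$.
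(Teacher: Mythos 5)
Your proposal is correct and is essentially the paper's own argument: since $\beta=\rho\,d\sigma$ (as $\rho\sigma=0$), your perturbed section $\tilde\sigma=\sigma-\iota s\beta$ coincides with the paper's $\widetilde{\sigma}=(1-\iota h\rho\, d)\sigma$, and your homotopy $H=\iota s\rho$ is exactly the paper's $\widetilde{h}=\iota h\rho$. The only difference is presentational — you derive the formulas by factoring the obstruction $d\sigma-\sigma d$ through $\iota$, while the paper verifies the same identities via its $\alpha,\beta,\gamma$ decomposition of $d$.
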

\begin{proof} We will denote all differentials  by $d$ and all identity morphisms by $1$.

Consider a splitting 
$$\xymatrix{0 \ar[r] &X\ar[r]_{\iota} & Y \ar[r]_{\pi}\ar@{-->}@/_1pc/[l]_{\rho}&Z\ar@{-->}@/_1pc/[l]_{\sigma} \ar[r]& 0}$$
and a  homotopy $h$ contracting $X$. (The dashed arrows are not necessarily chain maps). This is tantamount to the following  identities.

\begin{minipage}{.5\textwidth} %
 \begin{align}
&d^2=0,\label{dd}\\
&d\iota =\iota d,\label{di}\\
&d\pi =\pi d,\label{dp}\\
&\pi\iota=0,\label{pi}
\end{align}
\end{minipage} %
\begin{minipage}{.5\textwidth} %
\begin{align}
&\pi\sigma=1,\label{ps}\\
&\rho\iota=1,\label{ri}\\
&\sigma\pi + \iota\rho=1,\label{sp+ir}\\
&\rho\sigma=0,\label{rs}\\
&hd+dh=1.\label{hd}
\end{align}
\end{minipage}

\vspace{1em}
Now we  define the following expressions
\begin{align}
\alpha:=\sigma d\pi,\label{alpha}\ \ \ \ \beta:=\iota\rho d\sigma\pi ,\ \ \ \ \gamma:=\iota d\rho,\ \ \ \ \widetilde{h}:=\iota h\rho,\ \ \ \ \widetilde{\sigma}:=(1-\widetilde{h}d)\sigma.
\end{align}\label{stilde}
By (\ref{ps}) and (\ref{pi}) we have
\begin{align}\label{right_inverse}
\pi\widetilde{\sigma}=1
\end{align}
which together with (\ref{rs}), (\ref{ri}) and (\ref{hd})
 implies that
\begin{align}\label{almost_chain_map}
(\alpha +\beta + \gamma)\widetilde{\sigma} - \widetilde{\sigma}d=\widetilde{h}(\beta\alpha+\gamma\beta).
\end{align}
Now, by (\ref{dp}),  (\ref{di}) and (\ref{sp+ir}) we have
\begin{align}\label{alpha+beta+gamma}
\alpha +\beta + \gamma=d.
\end{align}
After squaring both sides of (\ref{alpha+beta+gamma}) we use (\ref{dd}) on the right hand side, and on the left hand side we use the following identities:
\begin{align}
\alpha\beta=\alpha\gamma=\beta^2=\beta\gamma=0 \ \ \ \ \ &(\mbox{implied by (\ref{pi})}),\\
\gamma\alpha=0 \ \ \ \ \ &(\mbox{implied by (\ref{rs})}),\\
\alpha^2=0 \ \ \ \ \ &(\mbox{implied by (\ref{ps}) and (\ref{dd})}),\\
\gamma^2=0 \ \ \ \ \ &(\mbox{implied by (\ref{ri}) and (\ref{dd})}),
\end{align}
to obtain
\begin{align}\label{ba+gb}
\beta\alpha+\gamma\beta=0.
\end{align}
Therefore, after substituting  (\ref{alpha+beta+gamma}) and  (\ref{ba+gb}) to (\ref{almost_chain_map}) we obtain that 
\begin{align}\label{chain_map}
d\widetilde{\sigma} - \widetilde{\sigma}d=0,
\end{align}
i.e. $\widetilde{\sigma}$ is a chain map. 

Moreover, by  (\ref{ri}), (\ref{hd}) and (\ref{sp+ir})
\begin{align}\label{sp+gh+hb+hg}
\widetilde{\sigma}\pi +\gamma\widetilde{h} + \widetilde{h}(\beta+\gamma)=1.
\end{align}
However, using the following identities: 
\begin{align}\label{alpha/beta_h_tilde}
\alpha\widetilde{h}=0,\ \ \ \ \beta\widetilde{h}=0\ \ \ \ &(\mbox{implied by (\ref{pi})}),\\ 
\widetilde{h}\alpha=0\ \ \ \ &(\mbox{implied by (\ref{rs})}),
\end{align}
we can complete (\ref{sp+gh+hb+hg}) to
\begin{align}\label{almost_ho_inverse}
\widetilde{\sigma}\pi +(\alpha+\beta+\gamma)\widetilde{h} + \widetilde{h}(\alpha+\beta+\gamma)=1
\end{align}
which by (\ref{alpha+beta+gamma}) reads as
\begin{align}\label{ho_inverse}
\widetilde{\sigma}\pi +d\widetilde{h} + \widetilde{h}d=1.
\end{align}
Together with  (\ref{right_inverse}) the latter means that $\widetilde{\sigma}$ is a homotopy inverse to $\pi$.
\end{proof}
Since the above Lemma holds in any abelian category, an immediate consequence is its dual version.
\begin{lemma}
Assume that    
$$\xymatrix{0 \ar[r] &X\ar[r]^{\iota} & Y \ar[r]^{\pi}&Z\ar[r]& 0}$$
is a short exact sequence of complexes in an abelian category split in the category of graded objects. Provided  $Z$ is contractible, $\iota$ is a homotopy equivalence. 
\end{lemma}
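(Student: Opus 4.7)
The plan is to reduce this to Lemma~\ref{main lemma} by passing to the opposite abelian category. Since that lemma was stated and proved in an arbitrary abelian category $\mathcal{A}$, it applies verbatim in $\mathcal{A}^{op}$, which is again abelian. Reversing all arrows, the short exact sequence $0\to X\xrightarrow{\iota}Y\xrightarrow{\pi}Z\to 0$ in $\mathcal{A}$ becomes $0\to Z\xrightarrow{\pi^{op}}Y\xrightarrow{\iota^{op}}X\to 0$ in $\mathcal{A}^{op}$, and a graded splitting $(\rho,\sigma)$ of the former yields the graded splitting $(\sigma^{op},\rho^{op})$ of the latter.

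The defining identities of a chain complex, a chain map, and a contracting homotopy (namely $d^{2}=0$, $df=fd$, and $hd+dh=1$) are symmetric under reversal of composition order, so contractibility of $Z$ in $\mathcal{A}$ is the same data as contractibility of $Z$ viewed as a complex in $\mathcal{A}^{op}$. Applying Lemma~\ref{main lemma} in $\mathcal{A}^{op}$ to the sequence $0\to Z\xrightarrow{\pi^{op}}Y\xrightarrow{\iota^{op}}X\to 0$ therefore produces a homotopy inverse to $\iota^{op}$, which is precisely a homotopy inverse to $\iota$ in $\mathcal{A}$.

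The only thing requiring care is the dictionary between data in $\mathcal{A}$ and $\mathcal{A}^{op}$: one must check that each of the structural identities (\ref{dd})--(\ref{hd}) used in the previous proof is preserved under the exchange $(\iota,\rho,\sigma,\pi,h)\leftrightarrow(\pi^{op},\sigma^{op},\rho^{op},\iota^{op},h^{op})$ together with the reversal of composition order. This is routine bookkeeping and presents no real obstacle. For a reader who prefers an intrinsic construction, one can equivalently mirror the proof of Lemma~\ref{main lemma} by perturbing $\rho$ instead of $\sigma$: set $\widetilde{h}:=\sigma h\pi$ and $\widetilde{\rho}:=\rho(1-d\widetilde{h})$, and then the same cancellation pattern yields $\widetilde{\rho}\iota=1$, $d\widetilde{\rho}=\widetilde{\rho}d$, and $\iota\widetilde{\rho}=1-d\widetilde{h}-\widetilde{h}d$, exhibiting $\widetilde{\rho}$ as a homotopy inverse to $\iota$.
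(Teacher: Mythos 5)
Your proof is correct and takes essentially the same route as the paper, which obtains this statement simply as the dual of Lemma~\ref{main lemma} by working in the opposite abelian category. Your explicit formulas $\widetilde{h}=\sigma h\pi$ and $\widetilde{\rho}=\rho(1-d\widetilde{h})$ are exactly the correct translations of $\iota h\rho$ and $(1-\widetilde{h}d)\sigma$ under arrow reversal, so the intrinsic variant you sketch is also sound.
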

\subsection{Other homotopy lemmas} The next lemmas are homotopy versions of some homological results collected in \cite{lod}. For the convenience of the reader we sketch their proofs by showing  the explicit homotopy inverses and homotopies as in \cite{lod}. 

We consider the first quadrant bicomplex ${\rm CC}(B\hspace{0.1em}|\hspace{0.1em}k)$ \cite{lod}  whose total complex computes cyclic homology, and the total complex of the sub-bicomplex ~${\rm CC}^{\{2\}}(B\hspace{0.1em}|\hspace{0.1em} k)$ consisting of the first two columns computes Hochschild homology of the $k$-algebra B  over a unital commutative ring $k$.
$$\xymatrix{
\ar[d]_{b}&\ar[d]_{-b'}&\ar[d]_{b} &\ar[d]_{-b'} & \\
B^{\hspace{0.03em}\otimes\hspace{0.03em} 3}\ar[d]_{b} & B^{\hspace{0.03em}\otimes\hspace{0.03em} 3}\ar[d]_{-b'}\ar[l]_{1-t} & B^{\hspace{0.03em}\otimes\hspace{0.03em} 3}\ar[d]_{b}\ar[l]_{N} & B^{\hspace{0.03em}\otimes\hspace{0.03em} 3}\ar[d]_{-b'}\ar[l]_{1-t}& \ar[l]_{N}\\
B^{\hspace{0.03em}\otimes\hspace{0.03em} 2}\ar[d]_{b} & B^{\hspace{0.03em}\otimes\hspace{0.03em} 2}\ar[d]_{-b'}\ar[l]_{1-t} & B^{\hspace{0.03em}\otimes\hspace{0.03em}2}\ar[d]_{b} \ar[l]_{N}& B^{\hspace{0.03em}\otimes\hspace{0.03em} 2}\ar[d]_{-b'}\ar[l]_{1-t}&\ar[l]_{N} \\
B                    & B\ar[l]_{1-t}                    & B\ar[l]_{N}                     & B \ar[l]_{1-t}                  & \ar[l]_{N}
}
$$
The following Lemma leads to a distinguished triangle in the homotopy category of complexes which (after applying the functor of homology) induces the long exact $ISB$-sequence relating Hochschild and cyclic homology \cite{lod}.
\begin{lemma}
The short exact sequence of total complexes  
\begin{align}
0\rightarrow {\rm Tot\ CC}^{\{2\}}(B\hspace{0.1em}\hspace{0.1em}|\hspace{0.1em}\hspace{0.1em}k)\rightarrow {\rm Tot\ CC}(B\hspace{0.1em}|\hspace{0.1em} k)\rightarrow {\rm Tot\ CC}(B\hspace{0.1em}|\hspace{0.1em} k)[2] \rightarrow 0
\end{align}
is graded-split and hence defines a distinguished triangle in homotopy category of complexes.
\end{lemma}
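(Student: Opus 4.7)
The plan is to exhibit the graded splitting directly from the column decomposition of the bicomplex. In each total degree $n$,
$$
({\rm Tot\ CC}(B|k))_{n}=\bigoplus_{p+q=n,\ p\geq 0}B^{\otimes(q+1)},
$$
indexed by the column position $p$. The sub-bicomplex ${\rm CC}^{\{2\}}(B|k)$ is obtained by keeping only the columns with $p\in\{0,1\}$, so its total complex sits inside ${\rm Tot\ CC}(B|k)$ as the graded subobject given by those summands. The quotient, as a bigraded object, consists of the columns with $p\geq 2$, and re-indexing $p'=p-2$ identifies it \emph{as a bicomplex}, with the alternating $b/{-b'}$ column differentials and the horizontal maps $1-t$ and $N$ intact, with ${\rm CC}(B|k)$ itself; on total complexes this is precisely the degree shift $[2]$ appearing in the statement.

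Given this picture, I would define the graded section ${\rm Tot\ CC}(B|k)[2]\to {\rm Tot\ CC}(B|k)$ by sending the $p'$-th column of the source onto the $(p'+2)$-nd column of the target via the identity on the underlying tensor powers, and the graded retraction ${\rm Tot\ CC}(B|k)\to {\rm Tot\ CC}^{\{2\}}(B|k)$ as the projection killing all summands with $p\geq 2$. Both are morphisms in the category of graded $k$-modules and manifestly split the short exact sequence at the graded level; they fail to be chain maps only because the horizontal maps $1-t$ and $N$ couple the columns $p=1$ and $p=2$ across the cut, but this is irrelevant for graded splittability.

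Finally, I would invoke the standard fact that any short exact sequence of complexes split in the category of graded objects yields a distinguished triangle in the homotopy category of complexes: the graded section endows the mapping cone of the inclusion with a canonical isomorphism to the quotient complex. No perturbation as in Lemma~\ref{main lemma} is required here, since no contractibility is being claimed, so I expect the argument to reduce to bookkeeping. The only mild obstacle is to make the identification of the quotient bicomplex with the shifted cyclic bicomplex precise; this is automatic because shifting by two columns preserves the parity determining whether each column carries $b$ or $-b'$, and because the horizontal maps $1-t$ and $N$ repeat with period two along the rows, so every sign and every connecting map matches after the translation.
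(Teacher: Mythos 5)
Your proposal is correct and matches the paper's approach: the paper's entire proof is the assertion that the graded splitting is obvious, and what you have written is exactly the column-by-column bookkeeping (projection onto columns $p\in\{0,1\}$, inclusion of columns $p\geq 2$ reindexed by $p'=p-2$) that makes that assertion precise. Your observations that the period-two pattern of $b$, $-b'$, $1-t$, $N$ is preserved under the two-column shift and that only the horizontal maps across the cut obstruct the splitting from being a chain splitting are exactly the points the paper leaves implicit.
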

\begin{proof}
The graded splitting is obvious.
\end{proof}

The next Lemma enables, in the special case of our interest, a substancial simplification of the complex computing Hochschild homology to a complex ${\rm  CC}^{\{1\}}(B\hspace{0.1em}|\hspace{0.1em} k)$ consisting of the first column of  ${\rm  CC}(B\hspace{0.1em}|\hspace{0.1em} k)$.
\begin{lemma}
Provided  $B$ is left-unital, there is a graded-split short exact sequence of complexes with contractible kernel   
\begin{align}
0\rightarrow {\rm B}(B\hspace{0.1em}|\hspace{0.1em}k)\rightarrow {\rm Tot\ CC}^{\{2\}}(B\hspace{0.1em}|\hspace{0.1em} k)\rightarrow {\rm  CC}^{\{1\}}(B\hspace{0.1em}|\hspace{0.1em} k)\rightarrow 0
\end{align}
and hence a chain homotopy equivalence 
\begin{align}
{\rm Tot\ CC}^{\{2\}}(B\hspace{0.1em}|\hspace{0.1em} k)\rightarrow {\rm  CC}^{\{1\}}(B\hspace{0.1em}|\hspace{0.1em} k).
\end{align}
\end{lemma}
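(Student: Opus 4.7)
The plan is to exhibit the short exact sequence explicitly and then invoke Lemma \ref{main lemma}. The bicomplex ${\rm CC}^{\{2\}}(B\hspace{0.1em}|\hspace{0.1em}k)$ has two columns: the first one is the Hochschild complex $(B^{\otimes(n+1)},b)$, which is exactly ${\rm CC}^{\{1\}}(B\hspace{0.1em}|\hspace{0.1em}k)$; the second one is the bar complex ${\rm B}(B\hspace{0.1em}|\hspace{0.1em}k)=(B^{\otimes(n+1)},-b')$; and the horizontal connecting maps are $1-t$. I take $\pi$ to be the projection onto the first column and $\iota$ to be the inclusion of the second column (with its degree shift) into the total complex. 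Since $\pi\iota=0$ and both $\pi$ and $\iota$ are inclusions/projections of direct summands at each level, the sequence is split in the category of graded objects: choose $\sigma(x)=(x,0)$ and $\rho(x,y)=y$. This verifies identities (\ref{ps})--(\ref{rs}) of Lemma \ref{main lemma}.

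Next I would produce a contracting homotopy $h$ for ${\rm B}(B\hspace{0.1em}|\hspace{0.1em}k)$ using left-unitality. If $e\in B$ satisfies $ea=a$ for all $a\in B$, the standard extra-degeneracy formula
\[
h(a_{0}\otimes\cdots\otimes a_{n}):=e\otimes a_{0}\otimes\cdots\otimes a_{n}
\]
gives $b'h+hb'=\mathrm{id}$, i.e. identity (\ref{hd}) of Lemma \ref{main lemma}. This is the only point where the left-unital hypothesis is used.

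Having assembled all the data $(\iota,\pi,\sigma,\rho,h)$ satisfying the hypotheses of Lemma \ref{main lemma}, the lemma immediately yields that $\pi:{\rm Tot\ CC}^{\{2\}}(B\hspace{0.1em}|\hspace{0.1em}k)\to {\rm CC}^{\{1\}}(B\hspace{0.1em}|\hspace{0.1em}k)$ is a chain homotopy equivalence, with explicit homotopy inverse $\widetilde{\sigma}=(1-\iota h\rho\, d)\sigma$. In concrete terms, $\widetilde{\sigma}(x)=(x,\,-h(1-t)(x))$ up to signs arising from the total-complex differential.

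The main point requiring care is not conceptual but bookkeeping: ensuring the sign conventions in the total differential of ${\rm Tot\ CC}^{\{2\}}$ match those used when reading off $d\iota=\iota d$ and $d\pi=\pi d$ from the columns $(b)$ and $(-b')$ with horizontal $1-t$. Once the signs are fixed, the naive graded splitting $\sigma(x)=(x,0)$ fails to be a chain map precisely because of the horizontal term $1-t$; it is exactly this defect that Lemma \ref{main lemma} corrects by the formula for $\widetilde{\sigma}$.
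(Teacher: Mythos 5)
Your overall strategy---identify the complementary column with the bar complex, contract it by the extra degeneracy $a_{0}\otimes\cdots\otimes a_{n}\mapsto e\otimes a_{0}\otimes\cdots\otimes a_{n}$ built from the left unit, and feed the resulting graded-split short exact sequence into the killing-contractible-complexes lemma---is exactly the paper's, and your contracting homotopy (together with the observation that left-unitality enters only there, via $ea=a$ in the first face of $b'$) is correct. The gap is in the orientation of the short exact sequence. The total differential of ${\rm Tot\ CC}^{\{2\}}$ is $(x,y)\mapsto\bigl(bx+(1-t)y,\,-b'y\bigr)$, since the horizontal arrow $1-t$ points from the second column into the first. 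Hence the first column $\{(x,0)\}$ is a subcomplex, while the second column $\{(0,y)\}$ is not: its differential leaks into the first column through $1-t$. Consequently your $\iota(y)=(0,y)$ and $\pi(x,y)=x$ satisfy neither $d\iota=\iota d$ nor $d\pi=\pi d$, and this is not a matter of sign bookkeeping---the obstruction is the whole operator $1-t$, which no choice of signs removes. Your closing diagnosis is inverted: $\sigma(x)=(x,0)$ \emph{is} a chain map, and it is $\iota$ and $\pi$ that fail to be; Lemma \ref{main lemma} tolerates failure only in the dashed arrows $\rho,\sigma$, so it cannot be applied to the data you assembled, and your proposed section $\widetilde{\sigma}(x)=(x,-h(1-t)x)$ is likewise not a chain map.

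The repair is mechanical but changes which lemma you invoke. The exact sequence actually runs $0\to{\rm CC}^{\{1\}}(B\,|\,k)\to{\rm Tot\ CC}^{\{2\}}(B\,|\,k)\to{\rm B}(B\,|\,k)\to 0$ with the bar complex as the \emph{contractible quotient}; it is graded-split by the obvious inclusions and projections, and one applies the dual version of Lemma \ref{main lemma} (the unlabelled lemma following it: contractible cokernel implies $\iota$ is a homotopy equivalence). The desired map ${\rm Tot\ CC}^{\{2\}}\to{\rm CC}^{\{1\}}$ is then the explicit homotopy inverse of the inclusion of the first column, namely the perturbed retraction $(x,y)\mapsto x-(1-t)h(y)$ with $h$ the (sign-adjusted) extra degeneracy. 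In fairness, the displayed sequence in the statement you were asked to prove carries the same arrows as yours, so you may have been led astray by it; but a complete proof has to notice that, as written, the bar complex is not a subcomplex of the total complex.
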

\begin{proof}
${\rm B}(B\hspace{0.1em}|\hspace{0.1em}k)$ is the bar-complex with the differential $b'$,  isomorphic up to a shift with the second column of ${\rm  CC}(B\hspace{0.1em}|\hspace{0.1em} k)$, admitting a contracting homotopy, defined with use of the left unit $1\in B$, of the following form as in \cite{lod}
\begin{align}
h(b^{0}\otimes\cdots\otimes b^{n})=1\otimes b^{0}\otimes\cdots\otimes b^{n}.
\end{align}
Since the graded splitting is obvious, the rest follows from Lemma \ref {main lemma}. 
\end{proof}

\begin{lemma}\label{matr infty}
If $B$ is unital the maps 
\begin{align}
{\rm inc}_{n}:{\rm C}_{n}(B \hspace{0.1em}|\hspace{0.1em} k)\rightarrow {\rm C}_{n}({\rm M}_{\infty}(B) \hspace{0.1em}|\hspace{0.1em} k) 
\end{align}
induced by the map of $k$-algebras
\begin{align}
{\rm inc}: B\rightarrow {\rm M}_{\infty}(B),\ \ \ \ 
b\mapsto   \left(\begin{array}{cc}b & 0\\
              0 & 0\end{array}\right)
\end{align}
form a homotopy equivalence of complexes.
\end{lemma}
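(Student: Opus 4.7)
The plan is to produce an explicit homotopy inverse of $\mathrm{inc}_*$ via the generalized trace and then invoke Lemma \ref{main lemma}. Define the generalized trace
$$\mathrm{tr}_n\colon \mathrm{C}_n(\mathrm{M}_\infty(B)\,|\,k)\longrightarrow \mathrm{C}_n(B\,|\,k),\quad m^0\otimes\cdots\otimes m^n\longmapsto \sum_{i_0,\ldots,i_n} m^0_{i_0 i_1}\otimes m^1_{i_1 i_2}\otimes\cdots\otimes m^n_{i_n i_0},$$
the sum being finite since each $m^j\in \mathrm{M}_\infty(B)$ has finitely many non-zero entries. A direct computation using the identity $\sum_k (ab)_{ik}\otimes c_{kj}=\sum_k a_{ik}\otimes (bc)_{kj}$ together with cyclicity of the indexing shows that $\mathrm{tr}_*$ commutes with the Hochschild boundary, and evaluation on matrices supported in the $(1,1)$-corner gives $\mathrm{tr}_*\circ\mathrm{inc}_*=\mathrm{id}_{\mathrm{C}_*(B\,|\,k)}$.

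The latter identity means that $\mathrm{inc}_*$ is a section of $\mathrm{tr}_*$ in the category of graded objects, so the short exact sequence
$$0\longrightarrow K_*\longrightarrow \mathrm{C}_*(\mathrm{M}_\infty(B)\,|\,k)\xrightarrow{\mathrm{tr}_*} \mathrm{C}_*(B\,|\,k)\longrightarrow 0,\qquad K_*:=\ker(\mathrm{tr}_*),$$
is split in the category of graded objects. Once $K_*$ is shown to be contractible, Lemma \ref{main lemma} implies that $\mathrm{tr}_*$ is a homotopy equivalence; combined with the equality $\mathrm{tr}_*\circ\mathrm{inc}_*=\mathrm{id}$, a standard two-out-of-three argument in the homotopy category (writing $\mathrm{inc}_*\simeq h\circ \mathrm{tr}_*\circ \mathrm{inc}_* = h$ for any homotopy inverse $h$ of $\mathrm{tr}_*$) forces $\mathrm{inc}_*$ to be homotopic to that homotopy inverse, so $\mathrm{inc}_*$ is itself a homotopy equivalence.

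The main obstacle is the construction of an explicit contracting homotopy on $K_*$. I would build it from the matrix-unit factorisation $b\,E_{ij}=E_{i1}\cdot(bE_{11})\cdot E_{1j}$ and the resulting chain-level identity: a sequence of elementary ``row moves'', together with cyclicity of the Hochschild boundary, rewrites each simple tensor as its $(1,1)$-corner counterpart plus an explicit boundary, and restricting this formula to $K_*$ (where the $(1,1)$-corner part is annihilated by $\mathrm{tr}_*$) yields the required nullhomotopy. To keep intermediate formulas bounded I would first prove the analogous statement for each truncation $\mathrm{M}_r(B)$ via the classical inner-derivation homotopy on $\mathfrak{gl}_r(B)$ (the Morita-invariance calculation) and then pass to the filtered colimit $\mathrm{M}_\infty(B)=\varinjlim_r \mathrm{M}_r(B)$, which preserves chain homotopy equivalences. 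Compatibility of this homotopy with the graded splitting by $\mathrm{inc}_*$ comes for free from the inner-derivation origin of the construction, so it slots cleanly into the framework of Lemma \ref{main lemma}.
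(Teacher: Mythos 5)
Your core ingredients are the same as the paper's: the generalized trace $\mathrm{tr}_n$ is exactly the left inverse used there, with $\mathrm{tr}_*\circ\mathrm{inc}_*=\mathrm{id}$ checked the same way. Where you differ is in the packaging. The paper does not route the argument through Lemma \ref{main lemma} at all: it writes down a single explicit homotopy $h$ on $\mathrm{C}_*(\mathrm{M}_\infty(B)\,|\,k)$, built from the matrix-unit factorisation you describe (each $\beta^j_{i_ji_{j+1}}$ is shuffled into the $(1,1)$-corner through factors $E_{i_01}(\cdot)\otimes E_{11}(\cdot)\otimes\cdots\otimes E_{1i_{m+1}}(1)$, summed over $m$ with alternating signs), satisfying $bh+hb=\mathrm{id}-\mathrm{inc}_*\circ\mathrm{tr}_*$; together with $\mathrm{tr}_*\circ\mathrm{inc}_*=\mathrm{id}$ this already exhibits $\mathrm{inc}_*$ and $\mathrm{tr}_*$ as mutually inverse homotopy equivalences, so nothing further is needed. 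Your detour --- contract $K_*=\ker\mathrm{tr}_*$, invoke Lemma \ref{main lemma} to make $\mathrm{tr}_*$ an equivalence, then transfer to $\mathrm{inc}_*$ by the two-out-of-three step --- is logically sound (one can check that $\mathrm{tr}_*\circ h=h'\circ\mathrm{tr}_*$ for a degeneracy-type operator $h'$ on $\mathrm{C}_*(B\,|\,k)$, so $h$ does restrict to $K_*$ and contracts it there), but it is redundant: the only way you propose to contract $K_*$ is by first producing the global homotopy, at which point the conclusion is already in hand. The one genuine piece of content in this lemma is precisely that explicit formula, which your proposal describes in outline (via the Morita-invariance calculation for $\mathrm{M}_r(B)$ and a colimit) but does not write down; note also that the colimit step silently requires the homotopies for varying $r$ to be compatible with the inclusions $\mathrm{M}_r(B)\hookrightarrow\mathrm{M}_{r+1}(B)$ --- true for the standard formula, since it only moves entries through the first row and column, but worth saying, and avoided entirely by writing the formula once for $\mathrm{M}_\infty(B)$ as the paper does.
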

\begin{proof}
Following \cite{lod} we take an obvious left inverse to ${\rm inc}_{n}$ of the form
\begin{align}
\begin{split}
 {\rm tr}_{n}:  {\rm C}_{n}({\rm M}_{\infty}&(B)\hspace{0.1em}|\hspace{0.1em} k)\rightarrow  {\rm C}_{n}(B\hspace{0.1em}|\hspace{0.1em} k),\\
{\rm tr}_{n}(\beta^0\otimes\beta^1\otimes\cdots\otimes \beta^n)&:=\sum_{i_{0},\ldots,i_{n}}\beta^0_{i_{0}i_{1}}\otimes\beta^1_{i_{1}i_{2}}\otimes\cdots\otimes \beta^n_{i_{n}i_{0}},
\end{split}
\end{align}
which is also a right inverse to ${\rm inc}_{n}$ up to the explicit homotopy 
\begin{align}
\begin{split}
h(\beta^0\otimes\beta^1\otimes\cdots\otimes \beta^n):=\sum_{m, i_{0},\ldots,i_{m}}(-1)^{m}E_{i_{0}1}(\beta^0_{i_{0}i_{1}})\otimes E_{11}(\beta^1_{i_{1}i_{2}})\otimes\cdots\\
\cdots\otimes E_{11}(\beta^r_{i_{m}i_{m+1}})\otimes E_{1i_{m+1}}(1)\otimes \beta^{m+1}\otimes\cdots\otimes\beta^n,
\end{split}
\end{align}
where $E_{ij}(b)$ denotes the elementary matrix with a single possibly non-zero entry $b\in B$.
\end{proof}
\begin{lemma}\label{conj ident}
 If  $B$ is a unital  the action of the group ${\rm GL}_{\infty}(B)$ on the algebra ${\rm M}_{\infty}(B)$ by conjugation
\begin{align}
\begin{split}
{\rm GL}_{\infty}(B)\times {\rm M}_{\infty}(B)&\rightarrow  {\rm M}_{\infty}(B),\\
(\gamma, \beta)&\mapsto \gamma\beta\gamma^{-1}
\end{split}
\end{align} 
induces a trivial action on  the object ${\rm C}({\rm M}_{\infty}(B)\hspace{0.1em}|\hspace{0.1em} k)$ of homotopy category of complexes.
\end{lemma}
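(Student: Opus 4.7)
The plan is to reduce the claim to the classical fact that an inner automorphism of a unital algebra acts as the identity on the Hochschild complex in the homotopy category, and then to transfer this triviality through the Morita-type equivalence established in Lemma \ref{matr infty}.

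Fix $\gamma \in {\rm GL}_\infty(B)$. By the definition of ${\rm GL}_\infty(B)$ as the filtered colimit ${\rm colim}_N\,{\rm GL}_N(B)$ under the stabilization $g \mapsto {\rm diag}(g,1,1,\ldots)$, there exists some $N$ for which $\gamma \in {\rm GL}_N(B)$. Consequently $\gamma$ is an invertible element of the \emph{unital} matrix algebra ${\rm M}_N(B)$, and conjugation by $\gamma$ restricts to an \emph{inner} automorphism of ${\rm M}_N(B)$, whereas the same $\gamma$ is not literally an element of ${\rm M}_\infty(B)$.

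Next I would introduce the inclusion $j_N \colon {\rm M}_N(B) \hookrightarrow {\rm M}_\infty(B)$ as the top-left block. It is an algebra homomorphism intertwining the two conjugation actions, that is, $j_N \circ {\rm Ad}_\gamma = {\rm Ad}_\gamma \circ j_N$, where the left-hand conjugation is internal to ${\rm M}_N(B)$ and the right-hand one is the conjugation on ${\rm M}_\infty(B)$. Applying the Hochschild chain functor and exploiting the canonical algebra isomorphism ${\rm M}_\infty({\rm M}_N(B)) \cong {\rm M}_\infty(B)$ obtained by reindexing the matrix entries, Lemma \ref{matr infty} applied to the unital algebra ${\rm M}_N(B)$ shows that $(j_N)_*$ is a chain homotopy equivalence.

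The remaining ingredient is the classical explicit chain homotopy establishing that any inner automorphism of a unital algebra acts as the identity on Hochschild chains: for unital $R$ and $u \in R^\times$, one has a standard formula (see \cite{lod}) for a homotopy $h$ satisfying $bh + hb = ({\rm Ad}_u)_* - {\rm id}$, obtained by inserting $u$ and $u^{-1}$ in consecutive tensor slots. Applying this to $R = {\rm M}_N(B)$ and $u = \gamma$ yields $({\rm Ad}_\gamma)_* \simeq {\rm id}$ on $C_*({\rm M}_N(B)|k)$. Combined with the intertwining property above, this produces $({\rm Ad}_\gamma)_* \circ (j_N)_* \simeq (j_N)_*$ on $C_*({\rm M}_\infty(B)|k)$; composing both sides with a homotopy inverse of $(j_N)_*$ finally yields $({\rm Ad}_\gamma)_* \simeq {\rm id}$, which is the required triviality in the homotopy category.

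The main obstacle is securing the classical inner-automorphism homotopy for $\gamma$. Its formula fundamentally requires $\gamma$ and $\gamma^{-1}$ to occur as tensor factors, which is meaningful only when they are genuine elements of the algebra; the reduction to ${\rm M}_N(B)$ is precisely what resolves this difficulty, since ${\rm M}_\infty(B)$ itself does not contain $\gamma$. Beyond this, the proof is a three-way composition of naturality (the commuting square), Morita-type equivalence (Lemma \ref{matr infty}), and the classical inner-automorphism homotopy.
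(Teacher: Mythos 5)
Your argument is correct, but it takes a genuinely different route from the paper's. The paper confronts the obstacle you identify (that $\gamma\notin{\rm M}_\infty(B)$, so the classical inner-automorphism homotopy cannot be written verbatim) head-on: it observes that left multiplication by $\gamma$ and right multiplication by $\gamma^{-1}$ are separately well-defined operators on ${\rm M}_\infty(B)$, and writes an explicit homotopy on ${\rm C}({\rm M}_\infty(B)\hspace{0.1em}|\hspace{0.1em}k)$ itself, built by inserting the two half-actions $\beta\mapsto\beta\gamma^{-1}$ and $\beta\mapsto\gamma\beta$ into consecutive tensor slots, directly connecting the identity to $({\rm Ad}_\gamma)_*$. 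You instead descend to a finite stage: $\gamma$ is the stabilization of some $\gamma_0\in{\rm GL}_N(B)$, conjugation by $\gamma_0$ is genuinely inner on the unital algebra ${\rm M}_N(B)$ where the classical homotopy applies, and the triviality is transported through the corner inclusion $j_N$, which is a homotopy equivalence by Lemma \ref{matr infty} applied to ${\rm M}_N(B)$ combined with the reindexing isomorphism ${\rm M}_\infty({\rm M}_N(B))\cong{\rm M}_\infty(B)$; your final two-sided composition with a homotopy inverse of $(j_N)_*$ is carried out correctly. What the paper's route buys is a single explicit natural homotopy formula on the infinite-matrix complex, uniform in $\gamma$ and independent of any choice of $N$ and of Lemma \ref{matr infty} --- which matters here, since the paper insists throughout on working with explicit natural homotopies rather than mere homotopy classes. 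What your route buys is economy of verification: every ingredient is a standard cited fact and no new homotopy identity needs to be checked; the price is that the resulting homotopy is an unwieldy composite depending on $N$ and on the homotopies of Lemma \ref{matr infty}.
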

\begin{proof}
The action of $\gamma$  on  ${\rm M}_{\infty}(B)$ by algebra automorphisms  is realized as simultaneous application of the two well defined actions: $\beta\mapsto \gamma\beta$ and $\beta\mapsto \beta\gamma^{-1}$. Since $k$ is unital commutative it induces the following well defined maps, the action on
 ${\rm C}({\rm M}_{\infty}(B)\hspace{0.1em}|\hspace{0.1em} k)$
 \begin{align}
\begin{split}
\gamma(\beta^0\otimes\beta^1\otimes\cdots\otimes \beta^n)=
\gamma\beta^0\gamma^{-1}\otimes\gamma\beta^1\gamma^{-1}\otimes\cdots\otimes \gamma\beta^n\gamma^{-1}
\end{split}
\end{align}
and a homotopy between the identity and that action
\begin{align}
\begin{split}
h(\beta^0\otimes\beta^1\otimes\cdots\otimes \beta^n)=\sum_{m=0 }^{n}
(-1)^{m}\beta^0\gamma^{-1}\otimes\gamma\beta^1\gamma^{-1}\otimes\cdots\\
\cdots\otimes \gamma\beta^m\gamma^{-1}\otimes \gamma\beta^{m+1}\otimes \beta^{m+2}\otimes\cdots\otimes\beta^n.
\end{split}
\end{align}
\end{proof}
\subsection{Abstract Chern-type  characters for cyclic objects}
Let us note now that for any cyclic object $X=(X_{m})$ in a category of modules we can consider sequences $x=(x_{m})$ satisfying the following two conditions when acted on by the cyclic operator 
${\rm t}$ and face operators ${\rm d}_{i}$
\begin{align}\label{abs tc}
{\rm t}(x_{m})&=(-1)^{m}x_{m},\\
\label{abs dc}
{\rm d}_{i}x_{m}&=x_{m-1}.
\end{align} 
Forming a module ${\mathcal K}(X)$, consisting of such sequences is a functor. 
For every element $x\in {\mathcal K}(X)$ we can construct a natural sequence of even chains in ${\rm Tot\ CC}(X)$ of the form
\begin{align}\label{abs chn}
{\rm ch}_{n}(x)=\sum_{m=0}^{2n}(-1)^{\lfloor m/2\rfloor}\frac{m!}{\lfloor m/2\rfloor !}x_{m},
\end{align} 
to obtain  a sequence of natural transformations.
\begin{proposition}\label{abs ch}
For every $x\in {\mathcal K}(X)$ the chains  ${\rm ch}_{n}(x)$ are cycles of degree $2n$ in ${\rm Tot\ CC}(X)$, whose cohomology classes  form a sequence stable under Connes periodicity operator $S$.
\end{proposition}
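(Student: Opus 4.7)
The plan is to verify the two assertions separately: first that ${\rm ch}_n(x)$ is a total cycle, and then that the resulting homology classes form an $S$-stable sequence.

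For the cycle property, I would place each summand $c_m x_m$ (with $c_m=(-1)^{\lfloor m/2\rfloor}\,m!/\lfloor m/2\rfloor!$) at bidegree $(2n-m,\,m)$ of the cyclic bicomplex, so that the total degree is $2n$. The cycle condition then reduces, at each bidegree $(2n-m,\,m-1)$, to a cancellation between the vertical contribution of $c_m x_m$ (namely $b$ or $-b'$ depending on the parity of the column $2n-m$) and the horizontal contribution of $c_{m-1} x_{m-1}$ (namely $N$ or $1-t$ correspondingly). Conditions (\ref{abs dc}) and (\ref{abs tc}) turn all of $b(x_m)$, $b'(x_m)$, $(1-t)(x_{m-1})$, $N(x_{m-1})$ into scalar multiples of $x_{m-1}$: a direct telescoping of signs in the simplicial sum gives, for $m$ even, $b(x_m)=x_{m-1}$, $(1-t)(x_{m-1})=2x_{m-1}$ and the other two vanishing; for $m$ odd, $b'(x_m)=x_{m-1}$, $N(x_{m-1})=m\,x_{m-1}$ and the other two vanishing. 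The cycle condition therefore collapses to the pair of recurrences
$$c_m=-2c_{m-1}\ \ (m\text{ even}),\qquad c_m=m\,c_{m-1}\ \ (m\text{ odd}),$$
both of which are immediate from the explicit formula for $c_m$ upon noting that $\lfloor m/2\rfloor$ jumps by one precisely when $m$ turns from odd to even.

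For the $S$-stability, I would read the statement directly off the graded-split short exact sequence
$$0\to{\rm Tot\ CC}^{\{2\}}(X)\to{\rm Tot\ CC}(X)\to{\rm Tot\ CC}(X)[2]\to 0$$
established earlier in this section, in which Connes' operator $S$ is the map induced on homology by the quotient. Projecting ${\rm ch}_n(x)$ onto the quotient discards exactly the summands with $m\in\{2n-1,2n\}$, i.e.\ those sitting in the first two columns, and under the column-shift identification $({\rm Tot\ CC}(X)/{\rm Tot\ CC}^{\{2\}}(X))_{2n}\cong({\rm Tot\ CC}(X))_{2n-2}$, the remaining chain $\sum_{m=0}^{2n-2}c_m x_m$ is visibly ${\rm ch}_{n-1}(x)$. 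Hence $S[{\rm ch}_n(x)]=[{\rm ch}_{n-1}(x)]$, which is the required stability.

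The only real obstacle is the bookkeeping in the first step, where one must cross-reference the parity of the column index $2n-m$ (which selects $b$ vs.\ $-b'$ vertically and $N$ vs.\ $1-t$ horizontally) against the parity of $m$ itself (which controls how those four operators act on $x_m$ and $x_{m-1}$). Once the indexing conventions are fixed, the boundary cases $m=0$ (only the horizontal contribution at column $2n-1$ of row $0$) and $m=2n$ (only the vertical contribution at column $0$) reduce to the very same two recurrences as the interior cancellations, so the argument terminates without further input.
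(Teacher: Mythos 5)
Your proof is correct and follows essentially the same route as the paper, which simply invokes the formal argument of Loday's Lemma--Notation 8.3.3: your two recurrences $c_m=-2c_{m-1}$ ($m$ even) and $c_m=mc_{m-1}$ ($m$ odd) are exactly the content of the paper's displayed identities (\ref{b})--(\ref{b'}), and your treatment of $S$ via the quotient onto ${\rm Tot\ CC}(X)[2]$ is the same ``formal argument for stability'' the paper cites. Your version is merely more explicit in the parity bookkeeping; no new idea is introduced and none is missing.
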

\begin{proof}
All formal arguments in the proof of \cite[Lemma-Notation 8.3.3]{lod} can be adapted to our situation. Namely,  by (\ref{abs dc}) followed by (\ref{abs tc})
\begin{align}\label{b}
{\rm b}(-2x_{2l})&=-2 x_{2l-1}=-(1-{\rm t})x_{2l-1},\\ \label{b'}
{\rm b}'(-lx_{2l-1})&=-l x_{2l-2}={\rm N} x_{2l-2},
\end{align}
which means that the chain ${\rm ch}_{n}(x)$ is a
cycle in ${\rm Tot\ CC}_{2n}(X)$. Finally, also the formal argument for stability under Connes' periodicity operator $S$ from the proof of \cite[Lemma-Notation 8.3.3]{lod} is still valid in our situation.
\end{proof}
We call the resulting natural transformation  
\begin{align}
{\rm ch}_{n}(X): {\mathcal K}(X)\rightarrow {\rm HC}_{2n}(X)
\end{align}
the \emph{abstract cyclic character}.

The motivating example  comes from the construction of the Chern character from matrix idempotents.

Let us recall the well known fact that the Chern character taking values in cyclic homology of an algebra $B$ goes in fact to cyclic homology of a nonunital algebra ${\rm M}_{\infty}(B)$ of infinite matrices. This is so because of a fundamental equivalence between isoclasses of finitely generated projective modules over $B$ and  ${\rm GL}_{\infty}(B)$-conjugacy classes of idempotents in  ${\rm M}_{\infty}(B)$. The fact that for a given idempotent $\mathbf{e}:=(e_{ij})\in {\rm M}_{\infty}(B)$ the sequence of  elements ${\rm c}_{m}:={\rm c}_{m}(\mathbf{e})$, where
\begin{align}\label{cne}
{\rm c}_{m}(\mathbf{e}):=\mathbf{e}\otimes \cdots\otimes \mathbf{e}\in {\rm M}_{\infty}(B)^{\otimes\ \! (m+1)},
\end{align}
satisfies the conditions (\ref{abs tc})-(\ref{abs dc}) follows immediately from the form of   ${\rm c}_{m}(\mathbf{e})$ and the idempotent property $\mathbf{e}^2=\mathbf{e}$. This by the abstract Chern-type character property means that the  chains 
\begin{align}
\widetilde{\rm ch}_{n}(\mathbf{e}):=\sum_{m=0}^{2n}(-1)^{\lfloor m/2\rfloor}\frac{m!}{\lfloor m/2\rfloor !}c_{m}(\mathbf{e})
\end{align}
are cycles in ${\rm Tot\ CC}_{2n}({\rm M}_{\infty}(B))$ and the sequence of their homology classes is stable under Connes' periodicity operator. 

Note that up to this point the construction of $\widetilde{\rm ch}$ is completely tautological. The next argument, identifying cyclic homology of  an 
H-unital algebra ${\rm M}_{\infty}(B)$ with cyclic homology of a unital algebra $B$ uses a specific homotopy equivalence of chain complexes as in 
Lemma~\ref{matr infty} and is well defined on the level of ${\rm K}_{0}(B)$ by virtue of Lemma \ref{conj ident}. 
 Namely,  applying the ${\rm GL}_{\infty}(B)$-conjugacy invariant map 
\begin{align}{\rm Tot\ CC}_{\bullet}({\rm M}_{\infty}(B))\rightarrow {\rm Tot\ CC}_{\bullet}(B),
\end{align}
induced by the map defined for all elements $ \mathbf{b}^{k}=(b^{k}_{ij})\in {\rm M}_{\infty}(B)$ as
\begin{align}
{\rm tr}_{n}: \mathbf{b}^{0}\otimes \cdots\otimes \mathbf{b}^{n}\mapsto \sum_{i_{0},\ldots, i_{n}}b^{0}_{i_{0}i_{1}}
\otimes\ldots\otimes   b^{n-1}_{i_{n-1}i_{n}}\otimes b^{n}_{i_{n}i_{0}},
\end{align}
to the element $\widetilde{\rm ch}_{n}(\mathbf{e})$ one gets the Chern character ${\rm ch}_{n}(\mathbf{e})$ 
depending only on the class in ${\rm K}_{0}(B)$ defined by the idempotent $\mathbf{e}\in {\rm M}_{\infty}(B)$.

Another example of an abstract cyclic-homology Chern-type character will come  from a construction of a cyclic-homology Chern-Weil 
homomorphism. A tautological construction on the level of an H-unital algebra with canonically isomorphic cyclic homology will need a class of  
another H-unital block-matrix algebra extension, which we introduce in the next section.

\section{Row extensions of unital algebras} 

All rings in this section are associative and possibly non-unital.
Let $k\rightarrow B$ be a ring homomorphism  and $\varepsilon: M\rightarrow B$ be a $(B,k)$-bimodule map from a  $(B,k)$-bimodule $M$ to the 
$k$-ring $B$. We call such a structure an \emph{augmented module} over a $k$-ring $B$. We define a $k$-ring structure on $M$ depending on this 
data as follows. As a $k$-bimodule it is the underlying left $k$-bimodule of the $(B,k)$-bimodule $M$ with the multiplication of elements of $M$ 
defined as a $k$-bimodule (in fact $(B,k)$-bimodule) map (the tensor product is balanced over $k$)
\begin{align}\label{mult}
 M\otimes M\rightarrow M,\ \ m\otimes m'\mapsto \varepsilon(m)m'.
\end{align}
By left $B$-linearity of $\varepsilon$ we have the identity
\begin{align}\label{assoc}
\varepsilon(\varepsilon(m)m')m''=\varepsilon(m)(\varepsilon(m')m'')
\end{align}
which amounts to associativity of (\ref{mult}).

%\subsection{The matrix structure of algebras from augmented modules}

\begin{proposition}\label{str}
The map $\varepsilon$ is a  $k$-ring map onto a left ideal $k$-subring $J$ in $B$, whose kernel is an  ideal $I$ in $M$ 
with zero right multiplication by elements of $M$.
In particular, $M$ is a Hochschild extension of $J$ by $I$,
\begin{align}\label{hoch}
0\rightarrow I\rightarrow M\rightarrow J\rightarrow 0.
\end{align}
\end{proposition}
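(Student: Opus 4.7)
The plan is to verify the four assertions in the order stated, each reducing to the defining formula \eqref{mult} combined with left $B$-linearity of $\varepsilon$. All the work is bookkeeping; no step requires a new idea beyond what is already encapsulated in the identity \eqref{assoc}.

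First I would check multiplicativity of $\varepsilon$. By \eqref{mult} followed by left $B$-linearity, $\varepsilon(m\cdot m')=\varepsilon(\varepsilon(m)m')=\varepsilon(m)\varepsilon(m')$, so $\varepsilon$ is a $k$-ring map, and its image $J:=\varepsilon(M)$ is automatically a $k$-subring of $B$. That $J$ is moreover a left ideal in $B$ again uses only left $B$-linearity: for any $b\in B$ and $m\in M$ one has $b\,\varepsilon(m)=\varepsilon(b\cdot m)\in J$.

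Next I would turn to $I:=\ker\varepsilon$. Since $\varepsilon$ is a $(B,k)$-bimodule map, $I$ is a sub-$(B,k)$-bimodule of $M$. For $m\in M$ and $i\in I$ the product $m\cdot i=\varepsilon(m)\,i$ therefore lies in $I$, which may equally be verified via $\varepsilon(m\cdot i)=\varepsilon(m)\varepsilon(i)=0$. On the other side, for $i\in I$ and $m\in M$ the definition of the multiplication gives $i\cdot m=\varepsilon(i)\,m=0$. This single identity delivers two pieces of information at once: $I$ is closed under right multiplication by $M$ (trivially, inside $I$), and in fact $I\cdot M=0$, which is the claimed vanishing of right multiplication of $I$ by arbitrary elements of $M$. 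Combined with the previous paragraph this makes $I$ a two-sided ideal of $M$.

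The Hochschild extension assertion is then immediate: $\varepsilon$ is a surjective $k$-ring homomorphism $M\twoheadrightarrow J$ with kernel $I$, so \eqref{hoch} is a short exact sequence of $k$-rings of the required shape. I do not anticipate any genuine obstacle; the only point that demands some care is notational, namely keeping straight which side of the multiplication in $M$ corresponds to left $B$-linearity of $\varepsilon$, and ensuring that the $k$-balanced tensor products in \eqref{mult} are used consistently. Once that bookkeeping is in place, the fact that the multiplication $m\otimes m'\mapsto \varepsilon(m)m'$ is the unique one making $\varepsilon$ into a ring map with annihilating kernel essentially imposes the entire structure of the proposition.
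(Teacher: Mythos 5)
Your proposal is correct and follows essentially the same route as the paper: multiplicativity of $\varepsilon$ from left $B$-linearity via \eqref{assoc}, $J=\varepsilon(M)$ a left ideal (i.e.\ a $(B,k)$-sub-bimodule) of $B$, $I=\ker\varepsilon$ a two-sided ideal with $I\cdot M=0$ directly from \eqref{mult}, and hence the square-zero (Hochschild) extension \eqref{hoch}. No gaps; the only detail the paper makes slightly more explicit is that $IM=0$ forces $I^2=0$ and endows $I$ with a $J$-bimodule structure satisfying $IJ=0$, which your computation already contains implicitly.
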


\emph{Proof.} To prove that $\varepsilon$ is a  $k$-ring map we check that by left $B$-linearity of $\varepsilon$
\begin{align}\label{epsilonalgmap}
\varepsilon(\varepsilon(m)m')=\varepsilon(m)\varepsilon(m').
\end{align}
This implies that $I={\rm ker}(\varepsilon)$ is an ideal in $M$.
Since $\varepsilon$ is $(B,k)$-linear its image $J=\varepsilon(M)\subset B$ is a $(B,k)$-sub-bimodule isomorphic to $M/I$ via $\varepsilon$. By (\ref{mult}) $IM=0$, hence $I^2=0$ and $I$ becomes a $J$-bimodule such that $IJ=0$. Therefore $M$ is a Hochschild extension \cite{hochcoh, hochrel} of $J$ by $I$.\ $\Box$

\begin{proposition}\label{block}
Provided the surjective $(B,k)$-bimodule map $\varepsilon: M\rightarrow J$ admits a $k$-bimodule splitting, the $k$-ring $M$ is isomorphic to the $k$-bimodule $I\oplus J$ with multiplication
\begin{align}\label{cocmult}
(i,j)(i',j')=(ji'+\omega(j,j'), jj'),
\end{align}
where  $\omega: J\otimes_{k}J\rightarrow I$ is a $k$-bimodule map satisfying
\begin{align}\label{cocycle}
j\omega(j',j'')-\omega(jj', j'')+\omega(j,j'j'')=0.
\end{align}

If $J$ has a $k$-central right unit one can assume $\omega=0$, i.e.
\begin{align}\label{sdp}
M\cong \left(\begin{array}{cc}J & I\\
              0 & 0\end{array}\right).
\end{align}
\end{proposition}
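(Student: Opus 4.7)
The plan is to use the splitting $s\colon J\to M$ of $\varepsilon$ to transport the $k$-ring structure on $M$ to the direct-sum $k$-bimodule $I\oplus J$, and then read off the cocycle condition from associativity. Concretely, I would write every element of $M$ uniquely as $i+s(j)$ with $(i,j)\in I\oplus J$ and compute products directly from the defining formula $m\cdot m'=\varepsilon(m)m'$. Since $I=\ker\varepsilon$, we have $\varepsilon(i)=0$, which kills the products $i\cdot i'$ and $i\cdot s(j')$; this is exactly the $IM=0$ fact from Proposition \ref{str} and it is the reason the resulting multiplication has no $ij'$ term. The remaining terms give
\[
s(j)\cdot i'=\varepsilon(s(j))\,i'=j\cdot i',\qquad s(j)\cdot s(j')=j\cdot s(j'),
\]
and since $\varepsilon(j\cdot s(j'))=j\varepsilon(s(j'))=jj'$, the element $j\cdot s(j')-s(jj')$ lies in $I$. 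Defining $\omega(j,j'):=j\cdot s(j')-s(jj')$ and noting that $\omega$ is $k$-bilinear because $s$ is $k$-bilinear and the $B$-action is $k$-balanced, one obtains the displayed multiplication (\ref{cocmult}).

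Next I would verify the cocycle condition (\ref{cocycle}) by the standard calculation: expand both $((i,j)(i',j'))(i'',j'')$ and $(i,j)((i',j')(i'',j''))$ in the $I$-component using (\ref{cocmult}), note that the $i$-slot plays no role in associativity beyond its $J$-module structure (because $I$ multiplies trivially on the right and the $I$-component of products is determined by $J$-action on $I$ plus $\omega$), and compare. The associativity already granted by (\ref{assoc}) then forces
\[
\omega(jj',j'')-j\,\omega(j',j'')-\omega(j,j'j'')=0,
\]
which is (\ref{cocycle}). Conversely, any such $\omega$ yields an associative multiplication, so the first assertion is an equivalence between augmented-module extensions and Hochschild $2$-cocycles with values in $I$, compatible with the forced constraint $IM=0$.

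For the second assertion, given a $k$-central right unit $e\in J$ I would modify the splitting to
\[
s'(j):=j\cdot s(e).
\]
This is a section of $\varepsilon$ because $\varepsilon(j\cdot s(e))=j\varepsilon(s(e))=je=j$. It is $k$-left-linear by $k$-linearity of the $B$-action, and $k$-right-linear precisely because $e$ is $k$-central: for $\lambda\in k$ one has $s(e)\lambda=s(e\lambda)=s(\lambda e)=\lambda s(e)$, so $s'(j)\lambda=j\cdot(s(e)\lambda)=(j\lambda)\cdot s(e)=s'(j\lambda)$. The corresponding cocycle is
\[
\omega'(j,j')=j\cdot s'(j')-s'(jj')=j\cdot(j'\cdot s(e))-(jj')\cdot s(e)=0,
\]
so the extension splits as a $k$-ring, with the multiplication reducing to $(i,j)(i',j')=(ji',jj')$. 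Identifying $(i,j)$ with the matrix with $j$ and $i$ in the top row and zeros elsewhere matches this product with ordinary matrix multiplication, giving (\ref{sdp}).

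The only nontrivial point is the existence of the modified splitting $s'$: the obstruction is precisely the requirement that $e$ be $k$-central (for right $k$-linearity) and be a right unit (for $s'$ to be a section). Everything else is a routine check directly from Proposition \ref{str}, the definition (\ref{mult}), and the identity (\ref{epsilonalgmap}); no further structure on $B$ or $M$ is needed.
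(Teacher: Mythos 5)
Your proof is correct and follows essentially the same route as the paper: transport the multiplication through the splitting, read off $\omega$ as the defect of multiplicativity of the section, and obtain (\ref{cocycle}) from associativity together with $IM=0$. The only cosmetic difference is in the second half, where you replace the section by $s'(j)=j\cdot s(e)$ directly, whereas the paper exhibits $\omega$ as the Hochschild coboundary of $\lambda(j)=-\omega(j,e)$ and applies the automorphism $(i,j)\mapsto(i+\lambda(j),j)$; since $s'=s-\lambda$, these are the same modification, and yours reaches $\omega'=0$ slightly more directly.
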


\emph{Proof.} We will prove the proposition using a non-unital version of the relative Hochschild theory \cite{hochcoh, hochrel} of $k$-bimodule split extensions $M$ of $J$ by ideals $I$ satisfying $I^2=0$ under the stronger assumption that  $IM=0$.
A $k$-bimodule splitting of (\ref{hoch}) gives an isomorphism of $k$-bimodules $I\oplus J$ and amounts to a $k$-bimodule map $\sigma: J\rightarrow M$ such that $\varepsilon\circ \sigma = {\rm id}_{J}$. This defines $\omega(j,j'):=\sigma(j)\sigma(j')-\sigma(jj')$ which satisfies (\ref{cocycle}) by associativity of multiplication in $M$ and the property $IM=0$. It is in fact the Hochschild 2-cocycle condition missing one summand which vanishes by $IM=0$.

If $J$ has a $k$-central ($ec=ce$ for all $c\in k$) right unit $e\in J$ ($je=j$ for all $j\in J$) we can define a $k$-bimodule map $\lambda: J\rightarrow I$
\begin{align}\label{lambda}
\lambda(j):=-\omega(j,e)
\end{align}
and then putting $j''=e$ in (\ref{cocycle}), rewriting the result in terms of (\ref{lambda}) and adding the last summand being zero by $IJ=0$, we obtain
\begin{align}\label{cobound}
  \omega(j,j') &= \omega(j,j'e) = -j\omega(j',e)+ \omega(jj',e)\\
   & = j\lambda(j')-\lambda(jj')+\lambda(j)j'
\end{align}
which means that $\omega$ is a Hochschild coboundary of $\lambda$. By theory of Hochschild extensions this means that the automorphism
\begin{align}\label{aut}
(i,j)\mapsto (i+\lambda(j),j)
\end{align}
of the $k$-bimodule $I\oplus J$ transforms the multiplication (\ref{cocmult}) to the one with $\omega=0$. \ $\Box$

If $\varepsilon$ has a  $k$-bimodule section (in particular is surjective, i.e. $J=B$),  $B$ is unital and the  left $B$-module $I$ is free of rank $n$, $M$ is isomorphic to the algebra of $(n+1)\times (n+1)$ matrices over $B$, with nonzero entries concentrated at most in the first row, with $\varepsilon$ picking the first entry in that row. Motivated by this simple case we call  split $k$-ring extensions of the form 
\begin{align}\label{row}
M\cong \left(\begin{array}{cc}B & I\\
              0 & 0\end{array}\right),
\end{align}
 {\em row extensions}, even if $B$ is not right unital or $I$ is not free of finite rank as a  left $B$-module. For the further consideration it is crucial that such $M$ is a $k$-ring split extension, hence $M$ contains a copy of a $k$-ring $B$ as a $k$-subring with the ideal $I$ as its complement, and that $I$ is not merely a square zero ideal, but we have  $IM=0$. 
  Note that if the left unit $e$ of $B$  acts on the left $B$-module $M$ of the row extension as identity (then we say that $M$ is a unitary left module over a left unital $k$-ring $B$) $M$ becomes a left unital $k$-ring extension with the unit corresponding under the isomorphism (\ref{row}) to
\begin{align}\label{lu}
\left(\begin{array}{cc}e &0\\
              0 & 0\end{array}\right).
\end{align}
If $B$ is a $k$-algebra over a unital commutative ring $k$ we assume that all $k$-bimodules in question are symmetric (we refer to them simply as modules) and unitary.          
\subsection{Periodic cyclic homology of row extensions}
\begin{proposition}
The $k$-ring map $\varepsilon: M\rightarrow J$ induces an isomorphism of relative periodic cyclic homology of $k$-rings
\begin{align}\label{hp}
HP_{*}(M\hspace{0.1em}|\hspace{0.1em}k)\rightarrow HP_{*}(J\hspace{0.1em}|\hspace{0.1em}k).
\end{align}
\end{proposition}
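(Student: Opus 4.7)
The plan is to combine the $k$-bimodule splitting furnished by Proposition \ref{block} with the ``Killing contractible complexes'' principle of Lemma \ref{main lemma}. First I would observe that the $k$-ring short exact sequence $0 \to I \to M \to J \to 0$, being split in the category of $k$-bimodules, produces upon tensoring a short exact sequence of Connes-Tsygan bicomplexes
\begin{align*}
0 \to K \to CC(M\hspace{0.1em}|\hspace{0.1em} k) \to CC(J\hspace{0.1em}|\hspace{0.1em} k) \to 0
\end{align*}
which is split in the category of graded objects, where $K$ collects tensors having at least one factor from the summand $I$. The same graded splitting survives totalization and the periodicization into the bi-infinite bicomplex computing $HP_{*}$, so by Lemma \ref{main lemma} it is enough to show that the kernel total complex $K$ is contractible.

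The crux is to construct an explicit contracting homotopy on $K$. Here the row-extension relation $IM=0$ is essential: for every inner Hochschild face $d_{i}$ with $0\leq i<n$, the product $m_{i}m_{i+1}$ is zero whenever $m_{i}\in I$, so very few components of $b$ and $b'$ survive on tensors in $K$; likewise many summands of $1-t$ and $N$ collapse whenever an $I$-factor is present. Under these drastic vanishings, the natural candidate for the contracting homotopy is insertion of the image $\sigma(e)$ of the left unit of $B$ under the $k$-ring section $\sigma: J\hookrightarrow M$ (available by Proposition \ref{block} in the left-unital case, with the $k$-central right-unit normalization making $\omega=0$), modelled on the bar-complex extra degeneracy used in the Hochschild-simplification lemma of the previous subsection.

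Once $K$ is shown to be contractible in the periodic bicomplex, Lemma \ref{main lemma} supplies the chain-level homotopy inverse $\widetilde{\sigma}$ to $\varepsilon$, whence passing to homology yields the required isomorphism $HP_{*}(M\hspace{0.1em}|\hspace{0.1em} k)\to HP_{*}(J\hspace{0.1em}|\hspace{0.1em} k)$. I also note in passing that $\varepsilon\circ\sigma=\mathrm{id}_{J}$ at the chain level immediately gives the homotopy right inverse, so the real content is the homotopy left inverse, which is exactly what the homotopy on $K$ provides via the recipe of Lemma \ref{main lemma}.

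The principal obstacle is the explicit verification that the proposed contracting homotopy on $K$ is simultaneously compatible with all four Connes-Tsygan operators $b$, $b'$, $1-t$ and $N$, and not merely with the Hochschild differential $b$. A filtration of $K$ by the number or position of $I$-factors in a tensor --- in the spirit of the Gugenheim-style filtration alluded to in the introduction --- seems to be the correct device for organizing this verification inductively, treating each stratum in turn as the collapse of the operators becomes progressively more severe with increasing number of $I$-factors.
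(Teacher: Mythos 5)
Your strategy is genuinely different from the paper's, which disposes of this Proposition in one line: the extension (\ref{hoch}) has square-zero kernel ($I^{2}=0$, indeed $IM=0$), and Goodwillie's theorem on the invariance of periodic cyclic homology under nilpotent extensions --- in the non-unital, $k$-relative form of Corti\~nas--Valqui and Kadison --- immediately gives the isomorphism. Note that this argument needs neither a $k$-bimodule splitting of $\varepsilon$ nor any unit in $B$ or $J$, whereas your proof requires both; so even if completed, your route would establish the statement only under additional hypotheses (essentially only for row extensions of left-unital rings).

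More importantly, there is a gap at the decisive step. The unit-insertion operator you propose (placing $\sigma(e)$ next to the distinguished $I$-factor) does contract the kernel of $\varepsilon^{\otimes n+1}$ with respect to the Hochschild boundary $b$ (and, up to sign, $b'$) --- this is exactly what Theorem \ref{HHrow} verifies --- but it does not commute in any usable way with the cyclic operator $t$, hence not with $1-t$ or $N$: since $t$ moves the last tensor factor to the front, it changes the position of the $I$-factor relative to which the insertion is defined, and the would-be identity $hd+dh=1$ on the totalized periodic bicomplex acquires uncancelled cross-terms coming from the horizontal differentials. This is the same phenomenon as the extra degeneracy of the bar complex failing to be a homotopy of cyclic modules. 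Your concluding suggestion that a filtration by the number or position of $I$-factors ``seems to be the correct device'' is precisely where all the work lies, and it is not carried out; as it stands the contractibility of $K$ inside the periodic bicomplex is asserted, not proved. The standard repair is to establish contractibility only for the Hochschild ($b$-)complex, as the paper does, and then invoke the fact that a morphism of cyclic modules inducing an isomorphism on Hochschild homology induces isomorphisms on cyclic, periodic and negative cyclic homology \cite[Prop.~5.1.6]{lod}; the paper explicitly records this Hochschild-level route as the alternative to the Goodwillie and excision arguments.
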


\emph{Proof.} It is a consequence of the Goodwillie theorem \cite{good} (see Thm. 7.3 of \cite{cort} for the non-unital case) applied to the  Kadison ($k$-ring) periodic cyclic homology \cite{kadrel} of the $k$-ring extension (\ref{hoch}) by the nilpotent ideal $I$.\ $\Box$

\subsection{Hochschild complex of row extensions}
From now on we assume that the ground ring $k$ is a field, which we will supress in the notation.  Triangular 
$k$-algebras over a unital commutative ring $k$ are of the form
\begin{align}\label{triang}
T=\left(\begin{array}{cc}B &I\\
              0 & B'\end{array}\right),
\end{align}
where $B$, $B'$ are unital $k$-algebras and $I$ is a two-sided unitary $(B, B')$-bimodule (and symmetric as an underlying $k$-bimodule). The computation of  Hochschild homology of triangular $k$-algebras  is subsumed by \cite[Thm. 1.2.15]{lod} which says that the canonical $k$-algebra map $T\rightarrow B\times B'$ annihilating $I$ induces an isomorphism
\begin{align}\label{triang}
{\rm HH}_{*}(T)\cong{\rm HH}_{*}(B\times B').
\end{align}
Note that the pair of two projections onto the factors of the product $B\times B'$ induce a further isomorphism
\begin{align}\label{dir_sum}
{\rm HH}_{*}(B\times B')\cong{\rm HH}_{*}(B)\oplus {\rm HH}_{*}(B').
\end{align} 

Using the fact that a row extension fits into a ring extension of the form 
%could cover at least the $k$-algebra case of our Theorem \ref{HHrow}, by putting formally $B'=0$. However, for $B'=0$ the only right unitary $B'$ module (which condition is crucial in the proof of \cite[Thm. 1.2.15]{lod})  is trivial, hence it cannot cover the case of row extensions, unless $I=0$.  
\begin{align}
0\rightarrow  \left(\begin{array}{cc}B & I\\
              0 & 0\end{array}\right)\rightarrow  \left(\begin{array}{cc}B & I\\
              0 & k\end{array}\right)\rightarrow  \left(\begin{array}{cc}0 & 0\\
              0 & k\end{array}\right)\rightarrow 0,
\end{align}
which we shorten as ($T$ stands for a triangular $k$-algebra as above with $B'=k$) 
\begin{align}
0\rightarrow  M \rightarrow  T\rightarrow  k\rightarrow 0,
\end{align}
one can use Wodzicki's excision theorem \cite{wodz88, wodz89} for a left unital (hence H-unital) $k$-algebra $M$  to obtain a long exact sequence of Hochschild homologies, which by (\ref{triang}) and (\ref{dir_sum}) reads as

\begin{center} \begin{tikzpicture}[descr/.style={fill=white,inner sep=1.5pt}]
        \matrix (m) [
            matrix of math nodes,
            row sep=1em,
            column sep=2.5em,
            text height=1.5ex, text depth=0.25ex
        ]
        { {\rm HH}_{2n+1}(M) & {\rm HH}_{2n+1}(B)\oplus {\rm HH}_{2n}(k) & {\rm HH}_{2n+1}(k) \\
            {\rm HH}_{2n}(M) & {\rm HH}_{2n}(B)\oplus {\rm HH}_{2n}(k) & {\rm HH}_{2n}(k) \\
             {\rm HH}_{2n-1}(M) & {\rm HH}_{2n-1}(B)\oplus  {\rm HH}_{2n-1}(k) & {\rm HH}_{2n-1}(k). \\
        };

        \path[overlay,->, font=\scriptsize,>=latex]
        (m-1-1) edge (m-1-2)
        (m-1-2) edge (m-1-3)
        (m-1-3) edge[out=355,in=175,black] (m-2-1)
        (m-2-1) edge (m-2-2)
        (m-2-2) edge (m-2-3)
        (m-2-3) edge[out=355,in=175,black] (m-3-1)
        (m-3-1) edge (m-3-2)
         (m-3-2) edge (m-3-3);
\end{tikzpicture}
\end{center}
Since  in every row the first arrow goes into the first direct summand via the map induced by $\varepsilon$ and every second arrow is a projection 
onto a second direct summand, this proves that $\varepsilon$ induces a quasiisomorphism of Hochschild chain complexes.

Now we are  to promote it to a chain homotopy equivalence.

Using a $k$-module splitting $M=I\oplus B$ coming from a $k$-algebra splitting (\ref{row}), we can form split short exact sequences of 
symmetric $k$-bimodules ($\otimes$  stands for the $k$-balanced tensor product of symmetric $k$-bimodules)
\begin{align}
0\rightarrow \bigoplus_{p+q=n}M^{\otimes p}\otimes I
\otimes B^{\otimes q}\rightarrow M^{\otimes n+1}\stackrel{\varepsilon^{\otimes n+1}}{-\!\!\!-\!\!\!-\!\!\!\longrightarrow} 
B^{\otimes n+1}\rightarrow 0.    
\end{align}

Since $\varepsilon$ is a $k$-algebra map, and the multiplication of $M$ restricted to the image of $B$ under the $k$-algebra splitting inside $M$ 
coincides with the original multiplication of $B$,  the collection of induced maps $\varepsilon^{\otimes n+1}$ is a morphism of cyclic objects 
computing  Hochschild, cyclic, periodic  and negative cyclic homology of $k$-algebras. Now we show that for row extensions of left unital $k$-
algebras the induced map on  Hochschild chain complexes is a homotopy equivalence. It is another way to see that it induces an isomorphism on 
Hochschild homology, and hence by virtue of \cite[Prop. 5.1.6]{lod} this implies that the induced maps on cyclic, periodic  and negative cyclic 
homology are isomorphisms as well, alternative to the previous excision argument.

 \begin{theorem}\label{HHrow} Let $M$ be a left unitary row extension of a left unital $k$-ring $B$. 
Then the induced map of  Hochschild chain complexes 
\begin{align}\label{hh}
{\rm Tot\ CC}^{\{1\}}(M)\rightarrow {\rm Tot\ CC}^{\{1\}}(B)
\end{align}
is graded-split surjective and has a contractible kernel, in particular it is a chain homotopy equivalence.  
\end{theorem}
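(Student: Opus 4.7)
The plan is to verify the hypotheses of Lemma \ref{main lemma} for the short exact sequence of Hochschild chain complexes
$$0 \to K_\bullet \to {\rm Tot\ CC}^{\{1\}}(M) \xrightarrow{\varepsilon^{\otimes\bullet+1}} {\rm Tot\ CC}^{\{1\}}(B) \to 0,$$
where $K_n := \ker(\varepsilon^{\otimes(n+1)})$. The $k$-module splitting $M = I \oplus B$ coming from the row-extension structure induces, in each degree, the splitting $M^{\otimes(n+1)} = K_n \oplus B^{\otimes(n+1)}$ required by the lemma. What remains is to exhibit an explicit contracting homotopy on $K_\bullet$.

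I would use the canonical direct-sum decomposition
$$K_n = \bigoplus_{p=0}^{n} M^{\otimes p} \otimes I \otimes B^{\otimes(n-p)}$$
indexed by the position $p$ of the last $I$-factor in a pure tensor, and define $h \colon K_n \to K_{n+1}$ on the $p$-th summand by inserting the left unit $e \in B$ immediately after the last $I$-factor, with sign $(-1)^{p+1}$:
$$h(m_0 \otimes \cdots \otimes m_{p-1} \otimes i \otimes b_{p+1} \otimes \cdots \otimes b_n) := (-1)^{p+1}\, m_0 \otimes \cdots \otimes m_{p-1} \otimes i \otimes e \otimes b_{p+1} \otimes \cdots \otimes b_n.$$
The image stays in the $p$-th summand (now of $K_{n+1}$), so $h$ preserves the index $p$; the differential $b$ likewise never increases $p$, since its face $d_p$ always vanishes by $IM = 0$.

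The identity $bh + hb = \mathrm{id}_{K_n}$ reduces to three structural observations plus a sign count: (i) in $b(h(x))$, the face merging the original $i$ with the inserted $e$ vanishes, as $ie = 0$ by $IM = 0$; (ii) in $b(x)$, the face merging $i$ with $b_{p+1}$ vanishes for the same reason; (iii) the face of $b(h(x))$ immediately after the inserted $e$ returns $x$ --- for $p < n$ this is $d_{p+1}$ by $eb_{p+1} = b_{p+1}$, and for $p = n$ this is the cyclic $d_{n+1}$ by $em_0 = m_0$, both instances of the left unitality hypothesis. All other faces of $b(h(x))$ correspond bijectively to the nonzero faces of $b(x)$ --- those on the $M$-head side match with no shift, those on the $B$-tail side via a shift of the face-index by one, and the cyclic faces correspond to each other --- producing identical tensors whose signs become opposite once $(-1)^{p+1}$ is taken into account, so they cancel in pairs leaving only the contribution $x$ from (iii). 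Lemma \ref{main lemma} then delivers the chain homotopy equivalence. The main technical work is the sign bookkeeping in the pairwise cancellation; the reason the insertion must be placed immediately after the last $I$-factor is the asymmetry of $e$ in the row extension --- left identity but right annihilator of $I$ --- which is precisely what makes (i)--(iii) hold simultaneously.
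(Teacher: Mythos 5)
Your proposal is correct and follows essentially the same route as the paper: the same decomposition of the kernel as $\bigoplus_{p+q=n}M^{\otimes p}\otimes I\otimes B^{\otimes q}$, the same contracting homotopy inserting the left unit $e$ immediately after the $I$-factor with sign $(-1)^{p+1}$, and the same use of $IM=0$ and left unitality to get $bh+hb=\mathrm{id}$ before invoking Lemma \ref{main lemma}. No gaps.
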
 
\begin{proof} 
 We will show that the collection of maps 
\begin{align}
h: M^{\otimes p}\otimes I\otimes B^{\otimes q}&\rightarrow M^{\otimes p}\otimes I\otimes B^{\otimes q+1}\\
h(m_{1},\ldots, m_{p}, i, b_{1},\ldots, b_{q})& := (-1)^{p+1}(m_{1},\ldots, m_{p}, i, e, b_{1},\ldots, b_{q})
\end{align}
forms a homotopy contracting the kernel  of the map $\varepsilon^{\otimes n+1}$ of Hochschild complexes. 
Here, on the right hand side $e$ denotes the left unit of~$B$.  

The boundary on the kernel  induced by the Hochschild boundary, for $p+q=n$, reads as 
\begin{align}
b(m_{1},\ldots, m_{p}, i, b_{1},\ldots, b_{q})&= (b'(m_{1},\ldots, m_{p}), i,  b_{1},\ldots, b_{q})\nonumber\\
&-(-1)^{p}(m_{1},\ldots, m_{p-1},m_{p} i, b_{1},\ldots, b_{q})\nonumber\\
& -(-1)^{p}(m_{1},\ldots, m_{p}, i, b'(b_{1},\ldots, b_{q}))\nonumber\\
&+(-1)^{n}(b_{q}m_{1},\ldots, m_{p}, i, b_{1},\ldots, b_{q-1}),
\end{align}
where for any (not necessarily unital) associative $k$-ring $A$
\begin{align*}
b'(a_{1},\ldots, a_{r})&= (a_{1}a_{2},  a_{3},\ldots, a_{r})-(a_{1}, a_{2} a_{3},\ldots, a_{r})+\cdots +(-1)^{r}(a_{1},\ldots, a_{r-1}a_{r}).
\end{align*}
Note that here the condition $IM=0$ shortens the Hochschild boundary by one vanishing summand, as for the Hochschild cocycle encoding the structure of the extension in the proof of Proposition \ref{str}. 

Now, computing the two compositions $bh$ and $hb$ we get for $p+q=n$, $q>0$
\begin{align}
bh(m_{1},\ldots, m_{p}, i, b_{1},\ldots, b_{q})&=-(-1)^{p}(b'(m_{1},\ldots, m_{p}), i, e,  b_{1},\ldots, b_{q})\\
&+(m_{1},\ldots, m_{p-1}, m_{p}i, e, b_{1},\ldots, b_{q})\\
&+(m_{1},\ldots, m_{p}, i,  b_{1},\ldots, b_{q})\\
&-(m_{1},\ldots, m_{p}, i,  e, b'(b_{1},\ldots, b_{q}))\\
&+(-1)^{q}(b_{q}m_{1}, m_{2},\ldots, m_{p}, i, e, b_{1},\ldots, b_{q-1}),
\end{align} 

\begin{align}
hb(m_{1},\ldots, m_{p}, i, b_{1},\ldots, b_{q})&=(-1)^{p}(b'(m_{1},\ldots, m_{p}), i, e,  b_{1},\ldots, b_{q})\\
&-(m_{1},\ldots, m_{p-1}, m_{p}i, e, b_{1},\ldots, b_{q})\\
&+(m_{1},\ldots, m_{p}, i,  e, b'(b_{1},\ldots, b_{q}))\\
&-(-1)^{q}(b_{q}m_{1}, m_{2},\ldots, m_{p}, i, e, b_{1},\ldots, b_{q-1}),
\end{align}
and for $p+q=n$, $q=0$
\begin{align}
bh(m_{1},\ldots, m_{n}, i)&=-(-1)^{n}(b'(m_{1},\ldots, m_{n}), i, e)\\
&+(m_{1},\ldots, m_{n-1}, m_{n}i, e)\\
&+(m_{1},\ldots, m_{n}, i),
\end{align} 

\begin{align}
hb(m_{1},\ldots, m_{n}, i)&=(-1)^{n}(b'(m_{1},\ldots, m_{n}), i, e)\\
&-(m_{1},\ldots, m_{n-1}, m_{n}i, e),
\end{align} 
one sees that they add up to give $bh+hb=\rm{Id}$.
\end{proof}

\section{The noncommutative Chern-Weil homomorphism} 
%\subsection{Coalgebra-Galois extensions}

To understand  what follows, we refer to \cite{bh04} for the basic facts and definitions.
Let $A$ be a right comodule algebra for a coalgebra $C$ with a group-like element $e\in C$.  
We denote the $C$-coaction on $A$ with use of the Sweedler notation
\begin{align}
A\rightarrow A\otimes C,\ \ \ \ a\mapsto a_{(0)}\otimes a_{(1)}.
\end{align}
The subring of invariants of this coaction we denote by $B$, i.e.
\begin{align}
B= A^{{\rm co}\!\!\ C}:=\{ b\in A\mid  b_{(0)}\otimes b_{(1)}=b\otimes e\}.
\end{align}
We assume that $B\subseteq A$ is a $C$-Galois extension, which means that the canonical map 
\begin{align}
can: A\otimes_{B}A\rightarrow A\otimes C,\ \ \ \ 
a\otimes_{B}a'\mapsto aa'_{(0)}\otimes a'_{(1)}
\end{align}
and the canonical entwining
\begin{align}
\psi : 
C\otimes A\rightarrow A\otimes C,\ \ \ \ 
c\otimes a\mapsto {\rm can}({\rm can}^{-1}(1\otimes c)a)
\end{align}
are both invertible, one can define a left $C$-coaction on $A$
\begin{align}
A\rightarrow C\otimes A,\ \ \ \ a\mapsto a_{(-1)}\otimes a_{(0)}:=\psi^{-1}(a_{(0)}\otimes a_{(1)}).
\end{align}
If $C=H$ is a Hopf algebra with an invertible antipode $S$ and the grouplike element $e$ is the unit of $H$, and $A$ is a right $H$-comodule 
algebra, then  the coalgebra-Galois extension is called Hopf-Galois (with the Galois Hopf algebra $H$), and the left $H$-coaction makes the  opposite 
algebra $A^{\rm op}$ a left comodule algebra over $H$.

For a $C$-Galois extension as above one can define the \emph{translation map}  
\begin{align}
\tau: C\rightarrow A\otimes_{B}\! A,\ \ \ \ \tau(c):={\rm can}^{-1}(1\otimes c)
\end{align}
and prove that it is an $C$-bicolinear map, with respect to left and right coactions of $C$ on the left and right tensorand of $A\otimes_{B}\! A$, 
respectively.

%\subsection{Strong connections}
 
A \emph{strong connection}  $\ell$ \cite{bh04} is a unital ($\ell(e)=1\otimes 1$) $C$-bicolinear lifting of the \emph{translation map} $\tau$,
 $$\xymatrixcolsep{4pc}\xymatrix{& A\otimes A\ar[d]\\
C\ar@{-->}[ru]^{\ell}\ar[r]^{\tau}& A\otimes_{B}\! A .\!\!\!\!}
$$ 
One proves  that for a Hopf-Galois extension as above existence of $\ell$ is equivalent to projectivity of $A$ as a left $B$-module right $H$-comodule \cite{bh04}, as well as to faithful flatness of $A$ as a left $B$-module  \cite{scha-schn}. Such Hopf-Galois extensions are called \emph{principal}. Therefore for coalgebra-Galois extensions one focuses on the case of $C$-equivariant projective $C$-Galois extensions, also known as \emph{principal coalgebra-Galois extensions}. It is still sufficient for the existence of a strong connection.  

For our purposes we might use  the definition  of a strong 
connection just as in \cite{bb05}, which does not require unitality, since unitality does play no role in the sequel.  

\subsection{Ehresmann-Schauenburg quantum groupoid}

Provided $A$ is a  faithfully flat as a left $B$-module $C$-Galois extension of $B$, one introduces the Ehresmann-Schauenburg $B$-coring. In terms 
of the \emph{cotensor product} $\Box^{C}$ of right and left $C$-comodules it is a vector space $M:=A\Box^{C}A$. It is canonically a $B$-
bimodule with a canonical $B$-coring structure
\begin{align}
\Delta:  M&\rightarrow M\otimes_{B}M,\ \ \ \ \ \sum_{i}a_{i}\otimes a'_{i}\mapsto \sum_{i}a_{i(0)}\otimes \tau(a_{i(1)}) \otimes  a'_{i},\\
\varepsilon:  M&\rightarrow B,\ \ \ \ \ \ \ \ \ \ \ \ \ \ \ \! \sum_{i}a_{i}\otimes a'_{i}\mapsto \sum_{i}a_{i}a'_{i}.
\end{align}
If $C=H$ is a Hopf algebra, and $A$ is a  faithfully flat as a left $B$-module $H$-Galois extension of $B$ the Ehresmann-Schauenburg coring $M$ is a unital $B^{\rm e}$-subring of   $A^{\rm e}$
 $M=A\Box^{H}A^{\rm op}\subseteq  A^{\rm e}:=A\otimes A^{\rm op}$, and compatibility of the $B$-coring and subring of  $A^{\rm e}$  structures makes it a \emph{quantum groupoid}. 
 
 However, in what follows we would need only the fact that $(M, \varepsilon)$ is a left $B$-module with the augmentation equal to the counit of that coring. 
 
For Hopf-Galois extensions the canonical row extension  corresponding to the counit of the Ehresmann-Schauenburg quantum groupoid 
$\varepsilon :M\rightarrow B$ can be described as a canonically split extension of left $B$-modules 
$$\xymatrixcolsep{2pc}\xymatrix{0 \ar[r] &\Omega^{1}(A)^{{\rm co}\!\!\ H}\ar[r] &(A\otimes A)^{{\rm co}\!\!\ H} \ar[r]_{\varepsilon}&A^{{\rm co}\!\!\ H}\ar@{-->}@/_1.5pc/[l]_{\sigma} \ar[r]& 0}$$
with the canonical splitting  $\sigma(b)=b\otimes 1$ being an algebra map. Here  $\Omega^{1}(A)$ denotes the $A$-bimodule of universal noncommutative differentials of $A$. Therefore we obtain  the following complete description of the block-matrix structure  of our row extension
\begin{align}\label{row}
M\stackrel{\cong}{\rightarrow} \left(\begin{array}{cc}B &\Omega^{1}(A)^{{\rm co}\!\!\ H}\\
              0 & 0\end{array}\right), \ \ \ \ \ 
             \sum_{i}a_{i}\otimes a'_{i}\mapsto \left(\begin{array}{cc} \sum_{i}a_{i}a'_{i} & \sum_{i}a_{i} {\rm d}a'_{i}\\
              0 & 0\end{array}\right),
\end{align}
where on the right-hand side we use the $A$-bimodule structure of $\Omega^{1}(A)$ and the universal derivation 
$${\rm d}: A\mapsto \Omega^{1}(A),\ \ \ {\rm d}a=1\otimes a - a\otimes 1.$$

\subsection{The Chern-Weil homomorphism from a strong connection}

For any coalgebra $C$ we define the subspace 
$$\xymatrix{
C^{\rm tr}:={\rm Eq}(C \ar@<-.3ex>[r] \ar@<.6ex>[r] & C\otimes C)}$$
equalizing the pair of maps, where one arrow is the comultiplication and the second is the multiplication composed with the flip. 

Note that the canonical map $C^{\rm tr}\rightarrow C$ is the dual counterpart of the universal trace map $A\rightarrow A/[A, A]$ for an algebra $A$, and an element  of  $C^{\rm tr}$ defines canonically a trace on the dual convolution algebra $C^{*}$.

The classical case, when $C=\mathscr{O}(G)$ is the coordinate algebra  of a linear algebraic group $G$ with the comultiplication equivalent to the polynomial group composition, sheds some light on the problem of deeper understanding the relation between $C^{\rm tr}$ and the Chern-Weil map as follows. 

First of all, it is easy to see that 
\begin{align}\label{cotr}
C^{\rm tr}= \mathscr{O}({\rm Ad}(G))^{G},
\end{align}
 the latter meaning the algebra of ${\rm Ad}$-invariants (with respect to the action of $G$ on itself by means of conjugations), in other words, the algebra of class functions. 
 
Moreover, the kernel of the augmentation of $C$ is the maximal ideal $\mathfrak{m}$ corresponding to the neutral element of $G$. Moreover, the $\mathfrak{m}$-adic filtration of $\mathscr{O}({\rm Ad}(G))$ is $G$-invariant, hence passing to the ${\rm Ad}$-invariants of the associated graded algebra one gets the algebra
 \begin{align}
{\rm gr}_{\mathfrak{m}}\mathscr{O}\left({\rm Ad}(G)\right)^{G}=\left(\bigoplus_{n\geq 0}\mathfrak{m}^{n}/\mathfrak{m}^{n+1}\right)^{G}\cong {\rm Sym}(\mathfrak{m}/\mathfrak{m}^{2})^{G}=\bigoplus_{n\geq 0}\left({\rm Sym}^{n}\mathfrak{g}^{*}\right)^{G}
\end{align}  
of ${\rm Ad}$-invariant polynomials on the Lie algebra $\mathfrak{g}$. The latter is the domain of the classical Chern-Weil map and the infinitesimal counterpart of the right hand side of (\ref{cotr}). In the opposite direction,  replacing the Lie algebra $\mathfrak{g}$ by the $G$-space ${\rm Ad}(G)$ plays a fundamental role in the construction of $G$-equivariant cyclic homology after Block-Getzler \cite{blogetz}.

Below we will use the associativity of the cotensor product of bicomodules over a coalgebra \cite[11.6]{brze-wisb} and  the tensor-cotensor associativity \cite[10.6]{brze-wisb}, which both hold since our ground ring is a field. 
\begin{lemma}
For any $m\in\mathbb{N}$ the $m$-fold comultiplication map on $C$ defines a linear isomorphism
\begin{align}
C^{\rm tr}\stackrel{\cong}{\rightarrow}C\Box^{C\otimes C^{\rm op}}\left(\underbrace{C\hspace{0.2em}\Box^{C}\cdots \Box^{C}C}_{m+1}\right).
\end{align}
\end{lemma}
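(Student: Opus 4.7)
The plan is to reduce the general statement to the base case $m=0$ via cotensor-product associativity, and then verify that case by a direct Sweedler-notation calculation.

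First, iterated application of the natural isomorphism $M \Box^C C \cong M$ (via the counit, inverted by the coaction) identifies the inner cotensor product $\underbrace{C \Box^C \cdots \Box^C C}_{m+1}$ with $C$ as a $C$-bicomodule, the isomorphism $C \to C^{\Box_C(m+1)}$ being realized by the $m$-fold iterated comultiplication $\Delta^m\colon C \to C^{\otimes(m+1)}$ (whose image lies in the iterated cotensor by coassociativity, with inverse given by applying $\varepsilon$ to any $m$ of the $m+1$ factors). Using the associativity and tensor-cotensor associativity from \cite[10.6, 11.6]{brze-wisb}, this induces an isomorphism
\begin{equation*}
C \Box^{C \otimes C^{\rm op}} \bigl(\underbrace{C \Box^C \cdots \Box^C C}_{m+1}\bigr) \;\cong\; C \Box^{C \otimes C^{\rm op}} C,
\end{equation*}
reducing the assertion to the base case: $\Delta\colon C^{\rm tr} \to C\Box^{C\otimes C^{\rm op}} C$, $c\mapsto \sum c_{(1)}\otimes c_{(2)}$, is an isomorphism.

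For this base case, I would unpack the right and left $(C\otimes C^{\rm op})$-coactions on the bicomodule $C$ using coassociativity; the equalizer condition defining $C\Box^{C\otimes C^{\rm op}} C \subset C\otimes C$ then reads, for $\omega=c'\otimes c''$,
\begin{equation*}
\sum c'_{(2)}\otimes c'_{(3)}\otimes c'_{(1)}\otimes c'' \;=\; \sum c'\otimes c''_{(1)}\otimes c''_{(3)}\otimes c''_{(2)}\quad\text{in } C^{\otimes 4}.
\end{equation*}
For $\omega=\Delta(c)$ both sides become specific rearrangements of $\Delta^3(c)=\sum c_{(1)}\otimes c_{(2)}\otimes c_{(3)}\otimes c_{(4)}$, and the identity is equivalent to the cyclic invariance $\sum c_{(4)}\otimes c_{(1)}\otimes c_{(2)}\otimes c_{(3)} = \sum c_{(1)}\otimes c_{(2)}\otimes c_{(3)}\otimes c_{(4)}$. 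The implication cotrace $\Rightarrow$ cyclic invariance of $\Delta^n(c)$ for all $n$ I would establish by induction on $n$: the base case $n=1$ is the cotrace condition itself, and the inductive step applies $\Delta$ to an interior slot of the cyclic identity at the previous level and invokes coassociativity to rewrite both sides as cyclic shifts at the next level. For the inverse, I would define $C\Box^{C\otimes C^{\rm op}} C \to C^{\rm tr}$ by $c'\otimes c''\mapsto \varepsilon(c')c''$; applying $\varepsilon$ to appropriate pairs of slots of the equalizer condition yields the identities $\varepsilon(c')\Delta(c'')=c'\otimes c''$ and $\varepsilon(c')\tau\Delta(c'')=c'\otimes c''$, which together show that the candidate $c:=\varepsilon(c')c''$ lies in $C^{\rm tr}$ and has $\Delta$-image equal to $\omega$, giving bijectivity.

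The chief obstacle will be the careful tracking of the $C^{\rm op}$-convention in computing the left and right $(C\otimes C^{\rm op})$-coactions on the bicomodule $C$: the asymmetric placement of $C$ and $C^{\rm op}$ in the tensor factors produces the specific Sweedler pattern $(c_{(2)},c_{(3)},c_{(1)})$ rather than $(c_{(1)},c_{(2)},c_{(3)})$, and it is precisely this pattern that makes the equalizer condition land on cyclic invariance of $\Delta^3(c)$ — mis-specifying the convention would produce an identity incompatible with the cotrace property. Once the coactions are correctly identified, the remainder is a routine application of coassociativity and counitality together with the induction producing cyclic invariance from the cotrace condition.
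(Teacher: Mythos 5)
Your proposal is correct and follows essentially the same route as the paper: reduce to $m=0$ using that $C$ is the unit object for the cotensor product of bicomodules, check that $\Delta$ restricted to $C^{\rm tr}$ lands in $C\Box^{C\otimes C^{\rm op}}C$, and invert by applying the counit to the first cotensor factor. You merely spell out the Sweedler-notation verifications that the paper leaves as "easy to check," and these computations are accurate.
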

\begin{proof}
Since $C$ is the unit object of the monoidal category of bicomodules with respect to the cotensor product, it is enough to prove that isomorphism for  
$m=0$. Then it is easy to check that the comultiplication restricted to the elements from $C^{\rm tr}\subseteq C$ lands in 
$C\Box^{C\otimes C^{\rm op}}C$, and the application of the counit to the first cotensor factor  provides the inverse.
\end{proof}
Note that  applying the counit to the left most factor $C$ in the cotensor product 
\[
C\Box^{C\otimes C^{\rm op}}\left(V_{0}\hspace{0.2em}\Box^{C}\cdots \Box^{C}V_{m}\right)
\]
  we can identify it  with the circular cotensor product. For example, for m = 5 it looks like follows.
$$\begin{tikzpicture}[decoration = {markings,
    mark = at position 0.5 with {..........[>=stealth]{>}}
  }
  ]
  \path
         (90 :1cm) node (90) {$V_{0}$}
          (60 :1cm) node (60) {\rotatebox[origin=c]{-30}{$\hspace{0.05em} \Box^{C}\!$}}
         (30 :1cm) node (30) {\rotatebox[origin=c]{-60}{$V_{1}$}}
          (0 :1cm) node (0) {\rotatebox[origin=c]{-90}{$\hspace{0.05em}\Box^{C}\!$}}
        (330:1cm) node (330) {\rotatebox[origin=c]{240}{$V_{2}$}}
         (300 :1cm) node (300) {\rotatebox[origin=c]{210}{$\hspace{0.05em}\Box^{C}\!$}}
        (270:1cm) node (270) {\rotatebox[origin=c]{180}{$V_{3}$}}
        (240 :1cm) node (240) {\rotatebox[origin=c]{150}{$\hspace{0.05em}\Box^{C}\!$}}
        (210:1cm) node (210) {\rotatebox[origin=c]{120}{$V_{4}$}}
        (180 :1cm) node (180) {\rotatebox[origin=c]{90}{$\hspace{0.05em}\Box^{C}\!$}}
        (150:1cm) node (150) {\rotatebox[origin=c]{60}{$V_{5}$}}
       (120 :1cm) node (120) {\rotatebox[origin=c]{30}{$\hspace{0.05em}\Box^{C}\!$}};
\end{tikzpicture}$$
We use circular cotensor products in the following lemma.
\begin{lemma}\label{chw map}
The strong connection $\ell:C\rightarrow A\otimes A$ induces a linear map
\begin{align}
C^{\rm tr}\rightarrow M^{\otimes(n+1)},\ \ \ \ c\mapsto {\rm c}_{m}(\ell)(c):= {\ell}(c_{(1)})\otimes\cdots\otimes {\ell}(c_{(m+1)})
\end{align}
\end{lemma}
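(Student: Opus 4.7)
The plan is to chase the iterated comultiplication through the strong connection and use its $C$-bicolinearity together with the cotrace condition. Concretely, by the preceding lemma an element $c\in C^{\rm tr}$ can be identified with its image under the $(m+1)$-fold comultiplication
\[
c\mapsto c_{(1)}\otimes\cdots\otimes c_{(m+1)}\in C\,\Box^{C\otimes C^{\rm op}}\bigl(\underbrace{C\,\Box^{C}\cdots\Box^{C}C}_{m+1}\bigr).
\]
First I would apply $\ell$ slot-wise to obtain $\ell(c_{(1)})\otimes\cdots\otimes \ell(c_{(m+1)})\in(A\otimes A)^{\otimes(m+1)}$, and the remaining task is to show that each factor $\ell(c_{(i)})$ actually lies in $M=A\,\Box^{C}A$, so that the whole tensor lands in $M^{\otimes(m+1)}$.

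The key step is to transport the cotensor balancings from the coalgebra side to the algebra side via the two colinearity properties of~$\ell$. Right $C$-colinearity with respect to the right $C$-coaction on the second tensorand of $A\otimes A$ allows one to recast the right coaction on the second factor of $\ell(c_{(i)})$ in terms of $c_{(i+1)}$; left $C$-colinearity with respect to the induced left $C$-coaction on the first tensorand of $A\otimes A$ recasts the left coaction on the first factor of $\ell(c_{(i)})$ in terms of $c_{(i-1)}$. Combining these with the $C\,\Box^{C}$ balancings between consecutive pairs $c_{(i-1)}\otimes c_{(i)}$ and $c_{(i)}\otimes c_{(i+1)}$ yields exactly the $A\,\Box^{C}A$ condition for the interior factors of the tensor product.

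For the outermost factors $\ell(c_{(1)})$ and $\ell(c_{(m+1)})$, the matching of the remaining coactions is furnished by the outer $C\otimes C^{\rm op}$-balancing, which is the categorical expression of the cotrace condition. This is precisely why passing to cotraces, rather than working with arbitrary elements of $C$, is indispensable here: without the trace condition the two extreme tensorands would not combine into $M$, and the closure of the cyclic tensor product would fail.

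I expect the main obstacle to be the careful bookkeeping of the three different coactions involved --- the coproduct on $C$, the right $C$-coaction on $A$, and its push-forward to a left $C$-coaction via the entwining $\psi$ --- and verifying that the cotrace balancing $C\,\Box^{C\otimes C^{\rm op}}(-)$ translates precisely into the missing $A\,\Box^{C}A$ balancing at the boundary. Once this coaction calculus is set up, linearity of the resulting map ${\rm c}_{m}(\ell)$ is immediate from linearity of $\Delta$ and of~$\ell$.
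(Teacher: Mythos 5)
Your overall strategy --- identify $c\in C^{\rm tr}$ with its image under the iterated comultiplication in the circular cotensor power, apply $\ell$ slot-wise, and use bicolinearity plus the cotrace condition to transport the cotensor balancings --- is indeed the paper's strategy. But there is a genuine error in where you locate the $M$-factors. You claim that ``each factor $\ell(c_{(i)})$ actually lies in $M=A\,\Box^{C}A$''; this is false in general, even for cotraces. The coring $M$ is the cotensor product with respect to the \emph{inner} coactions of $A\otimes A$ (the right coaction on the first tensorand and the left coaction on the second), whereas the bicolinearity of $\ell$ controls the \emph{outer} coactions (the left coaction on the first tensorand and the right coaction on the second). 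Consequently the identities you write down --- right colinearity recasting the right coaction on the second leg of $\ell(c_{(i)})$, left colinearity recasting the left coaction on the first leg of $\ell(c_{(i+1)})$ --- establish the $\Box^{C}$-balancing \emph{between consecutive factors}; that is, they show that $\ell(c_{(1)})\otimes\cdots\otimes\ell(c_{(m+1)})$ lies in the circular cotensor product $(A\otimes A)\,\Box^{C}\cdots\Box^{C}(A\otimes A)$, not that any single $\ell(c_{(i)})$ lies in $M$.

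The missing step is the tensor--cotensor reassociation of this circular cotensor product: one regroups the legs so that each $M$-factor consists of the second leg of $\ell(c_{(i)})$ together with the first leg of $\ell(c_{(i+1)})$ (indices mod $m+1$), which is exactly where the inner $\Box^{C}$-conditions now sit. This is why the explicit formula in the subsequent lemma reads
\begin{align*}
{\rm c}_{m}(\ell)(c)=\bigl(c_{(m+1)}^{\ \langle 2\rangle}\otimes c_{(1)}^{\ \langle 1\rangle}\bigr)\otimes\bigl(c_{(1)}^{\ \langle 2\rangle}\otimes c_{(2)}^{\ \langle 1\rangle}\bigr)\otimes\cdots\otimes\bigl(c_{(m)}^{\ \langle 2\rangle}\otimes c_{(m+1)}^{\ \langle 1\rangle}\bigr),
\end{align*}
with the staggered grouping, rather than literally $\ell(c_{(1)})\otimes\cdots\otimes\ell(c_{(m+1)})$ with each $\ell(c_{(i)})$ read as an element of $M$. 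Your remark about the cotrace condition is the right one, but its role is to close the circle of balancings between $\ell(c_{(m+1)})$ and $\ell(c_{(1)})$, producing the factor $c_{(m+1)}^{\ \langle 2\rangle}\otimes c_{(1)}^{\ \langle 1\rangle}$, not to repair the two ``extreme'' tensorands of a slot-wise grouping. Already for $m=0$ your reading would require $\ell(c)\in A\,\Box^{C}A$, which bicolinearity does not provide; the correct element is the cyclically regrouped $c^{\langle 2\rangle}\otimes c^{\langle 1\rangle}$.
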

\begin{proof}
Since $\ell$ is a morphism of $C$-bicomodules, it can be applied to an element of the circular cotensor power of $C$ to get an element of the circular cotensor power of $A\otimes A$, which by the tensor-cotensor associativity can be written as an element of tensor power of $M=A\hspace{0.2em} \Box^{C}\! A$. For example, ${\rm c}_{5}(\ell)$ reads as  the dashed arrow in the following commutative diagram.
$$\xymatrixcolsep{5pc}\xymatrix{\begin{tikzpicture}[decoration = {markings,
    mark = at position 0.5 with {..........[>=stealth]{>}}
  }
  ]
  \path
         (90 :1cm) node (90) {$C$}
          (60 :1cm) node (60) {\rotatebox[origin=c]{-30}{$\hspace{0.05em} \Box^{C}\!$}}
         (30 :1cm) node (30) {\rotatebox[origin=c]{-60}{$C$}}
          (0 :1cm) node (0) {\rotatebox[origin=c]{-90}{$\hspace{0.05em}\Box^{C}\!$}}
        (330:1cm) node (330) {\rotatebox[origin=c]{240}{$C$}}
         (300 :1cm) node (300) {\rotatebox[origin=c]{210}{$\hspace{0.05em}\Box^{C}\!$}}
        (270:1cm) node (270) {\rotatebox[origin=c]{180}{$C$}}
        (240 :1cm) node (240) {\rotatebox[origin=c]{150}{$\hspace{0.05em}\Box^{C}\!$}}
        (210:1cm) node (210) {\rotatebox[origin=c]{120}{$C$}}
        (180 :1cm) node (180) {\rotatebox[origin=c]{90}{$\hspace{0.05em}\Box^{C}\!$}}
        (150:1cm) node (150) {\rotatebox[origin=c]{60}{$C$}}
       (120 :1cm) node (120) {\rotatebox[origin=c]{30}{$\hspace{0.05em}\Box^{C}\!$}};
\end{tikzpicture}\ar[dd]\ar@<7ex>@{-->}[r]^{{\rm c}_{5}(\ell)}&    \begin{tikzpicture}
%[decoration = {markings,
%    mark = at position 0.5 with {..........[>=stealth]{>}}
%  }
%  ]
  \path
         (60 :1cm) node (60) {\rotatebox[origin=c]{-30}{$M$}}
          (30 :1cm) node (30) {\rotatebox[origin=c]{-60}{$\otimes$}}
         (0 :1cm) node (0) {\rotatebox[origin=c]{-90}{$M$}}
          (330 :1cm) node (330) {\rotatebox[origin=c]{-120}{$\otimes$}}
        (300:1cm) node (300) {\rotatebox[origin=c]{210}{$M$}}
         (270 :1cm) node (270) {\rotatebox[origin=c]{180}{$\otimes$}}
        (240:1cm) node (240) {\rotatebox[origin=c]{150}{$M$}}
        (210 :1cm) node (210) {\rotatebox[origin=c]{120}{$\otimes$}}
        (180:1cm) node (180) {\rotatebox[origin=c]{90}{$M$}}
        (150 :1cm) node (150) {\rotatebox[origin=c]{60}{$\otimes$}}
        (120:1cm) node (120) {\rotatebox[origin=c]{30}{$M$}}
       (90 :1cm) node (90) {\rotatebox[origin=c]{0}{$\otimes$}};
\end{tikzpicture}\ar@{=}[dd]                             \\ \\
\begin{tikzpicture}[decoration = {markings,
    mark = at position 0.5 with {..........[>=stealth]{>}}
  }
  ]
  \path
         (90 :2cm) node (90) {$(A\otimes A)$}
          (60 :2cm) node (60) {\raisebox{2.2\depth}{{\rotatebox[origin=c]{-30}{$\hspace{0.4em} \Box^{C}\!$}}}}
         (30 :2cm) node (30) {\rotatebox[origin=c]{-60}{$(A\otimes A)$}}
          (0 :2cm) node (0) {\hspace{0.5em}\rotatebox[origin=c]{-90}{$\hspace{0.1em}\Box^{C}\!$}}
        (330:2cm) node (330) {\rotatebox[origin=c]{240}{$(A\otimes A)$}}
         (300 :2cm) node (300) {\raisebox{-1.7\height}{\rotatebox[origin=c]{210}{$\hspace{0.00em}\Box^{C}\!$}}}
        (270:2cm) node (270) {\rotatebox[origin=c]{180}{$(A\otimes A)$}}
        (240 :2cm) node (240) {\raisebox{-1.5\height}{\rotatebox[origin=c]{150}{$\hspace{0.4em}\Box^{C}\!$}}}
        (210:2cm) node (210) {\rotatebox[origin=c]{120}{$(A\otimes A)$}}
        (180 :2cm) node (180) {$\hspace{-0.5em}$\rotatebox[origin=c]{90}{$\hspace{0.05em}\Box^{C}\!$}}
        (150:2cm) node (150) {\rotatebox[origin=c]{60}{$(A\otimes A)$}}
       (120 :2cm) node (120) {\raisebox{0.8\height}{\rotatebox[origin=c]{30}{$\hspace{0.05em}\Box^{C}\!$}}};
\end{tikzpicture}\ar@<13ex>[r]^{\cong}& 
\begin{tikzpicture}[decoration = {markings,
    mark = at position 0.5 with {..........[>=stealth]{>}}
  }
  ]
  \path
         (60 :2cm) node (60) {\rotatebox[origin=c]{-30}{$(A\hspace{0.2em} \Box^{C}\! A)$}}
          (30 :2cm) node (30) {\raisebox{1\depth}{{\rotatebox[origin=c]{-60}{$\otimes$}}}}
         (0 :2cm) node (0) {\rotatebox[origin=c]{-90}{$(A\hspace{0.2em} \Box^{C}\! A)$}}
          (330 :2cm) node (330) {\rotatebox[origin=c]{-120}{$\otimes$}}
        (300:2cm) node (300) {\rotatebox[origin=c]{210}{$(A\hspace{0.2em} \Box^{C}\! A)$}}
         (270 :2cm) node (270) {\raisebox{-1.7\height}{\rotatebox[origin=c]{180}{$\otimes$}}}
        (240:2cm) node (240) {\rotatebox[origin=c]{150}{$(A\hspace{0.2em} \Box^{C}\! A)$}}
        (210 :2cm) node (210) {\rotatebox[origin=c]{120}{$\otimes$}}
        (180:2cm) node (180) {\rotatebox[origin=c]{90}{$(A\hspace{0.2em} \Box^{C}\! A)$}}
        (150 :2cm) node (150) {\rotatebox[origin=c]{60}{$\otimes$}}
        (120:2cm) node (120) {\rotatebox[origin=c]{30}{$(A\hspace{0.2em} \Box^{C}\! A)$}}
       (90 :2cm) node (90) {\raisebox{0.8\height}{\rotatebox[origin=c]{0}{$\otimes$}}};
\end{tikzpicture}}$$
\end{proof}

\begin{lemma}\label{symm cm} 
For any $c\in C^{\rm tr}$ the element 
\begin{align}\label{chw cm}
{\rm c}_{m}(\ell)(c):=
\left(c_{(m+1)}^{\ \ \langle 2\rangle}\otimes c_{(1)}^{\ \ \langle 1\rangle}\right)\otimes \left(c_{(1)}^{\ \ \langle 2\rangle}\otimes c_{(2)}^{\ \ 
\langle 1\rangle}\right)\otimes\cdots\otimes \left(c_{(m)}^{\ \ \langle 2\rangle}\otimes c_{(m+1)}^{\ \ \langle 1\rangle}\right)
\end{align}
belongs to the cyclic symmetric part of $M^{\otimes\!\!\ m+1}$.  
\end{lemma}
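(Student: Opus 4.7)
The plan is to establish ${\rm t}({\rm c}_m(\ell)(c)) = (-1)^m {\rm c}_m(\ell)(c)$, where ${\rm t}$ denotes the cyclic operator acting on $M^{\otimes(m+1)}$ by ${\rm t}(X_0 \otimes \cdots \otimes X_m) = (-1)^m X_m \otimes X_0 \otimes \cdots \otimes X_{m-1}$. Abbreviating the factors of (\ref{chw cm}) as $X_i = c_{(i)}^{\langle 2\rangle} \otimes c_{(i+1)}^{\langle 1\rangle}$ with cyclic indexing $c_{(0)} := c_{(m+1)}$, after cancelling the sign it suffices to check that the unsigned cyclic shift preserves $X_0 \otimes \cdots \otimes X_m$.

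The first step is to show that for any $c \in C^{\rm tr}$ and any $m \ge 0$, the iterated comultiplication $c_{(1)} \otimes \cdots \otimes c_{(m+1)} \in C^{\otimes(m+1)}$ is invariant under every cyclic permutation of its $m+1$ tensor factors. The base case $m = 1$ is exactly the defining cotrace identity $c_{(1)} \otimes c_{(2)} = c_{(2)} \otimes c_{(1)}$, and the inductive step follows by applying $\Delta$ to one of the tensor factors of the $m$-fold identity and using coassociativity to absorb the result back into the standard Sweedler notation. Conceptually this is just the content of the lemma immediately preceding Lemma~\ref{chw map}: the isomorphism there identifies $C^{\rm tr}$ with the circular cotensor product $C\hspace{0.2em}\Box^C\cdots\Box^C\hspace{0.1em}C$ having $m+1$ factors, whose defining equalizer condition already encodes precisely the cyclic invariance we need.

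Next I would compare the shifted expression with the original by direct inspection. The unsigned cyclic shift $X_m \otimes X_0 \otimes \cdots \otimes X_{m-1}$ has exactly the same formal shape as formula (\ref{chw cm}), but with every Sweedler index $c_{(i)}$ replaced by $c_{(i-1 \bmod m+1)}$. Since $\ell$ is applied factor by factor to $c_{(1)} \otimes \cdots \otimes c_{(m+1)}$ and the resulting factors of $A \otimes A$ are then paired circularly, such a uniform cyclic relabelling of the $m+1$ Sweedler indices produces the same element whenever the iterated comultiplication of $c$ is itself cyclically invariant. Combined with the first step, this yields invariance of ${\rm c}_m(\ell)(c)$ under the unsigned cyclic shift, and multiplication by $(-1)^m$ delivers the claimed symmetry.

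I expect the only genuine obstacle to be notational: one has to match the cyclic shift on $M^{\otimes(m+1)}$ (viewed as a plain tensor product of copies of $A\hspace{0.2em}\Box^C\hspace{0.1em}A$) with the cyclic shift on the circular cotensor product of $(A \otimes A)$'s used in the construction of Lemma~\ref{chw map}. Once the tensor-cotensor identification is unwound, the calculation is mechanical and the conclusion is forced entirely by the cotrace condition together with the $C$-bicolinearity of~$\ell$.
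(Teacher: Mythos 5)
Your proposal is correct and follows essentially the same route as the paper: the cotrace identity $c_{(1)}\otimes c_{(2)}=c_{(2)}\otimes c_{(1)}$ is promoted via coassociativity to cyclic invariance of the iterated comultiplication $c_{(1)}\otimes\cdots\otimes c_{(m+1)}$, and the cyclic shift on $M^{\otimes(m+1)}$ is then seen to act as a uniform cyclic relabelling of the Sweedler indices. The paper obtains the iterated invariance in one step by applying $\Delta^{m-1}\otimes C$ to both sides of the cotrace identity rather than by induction, but this is only a cosmetic difference.
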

\begin{proof} First of all let us note that, by the very definition of $c\in C^{\rm tr}$, applying the comultiplication $\Delta$ to any $c\in C^{\rm tr}$ 
we obtain a symmetric tensor 
\begin{align}\label{delta ctr}
c_{(1)}\otimes c_{(2)}=c_{(2)}\otimes c_{(1)}.
\end{align}
Since by coassociativity the result of application of the iterated comultiplication $\Delta^{m}$ to $c$ is the same as the result of  application of 
$\Delta^{m-1}\otimes C$ to  both sides of (\ref{delta ctr}), we have  
\begin{align}\label{iterated delta ctr}
\begin{split}
&\ \ \ \ c_{(1)}\otimes c_{(2)}\otimes\cdots\otimes c_{(m)}\otimes c_{(m+1)}\\
&= c_{(1)(1)}\otimes c_{(1)(2)}\otimes\cdots\otimes c_{(1)(m)}\otimes c_{(2)}\\
&= c_{(2)(1)}\otimes c_{(2)(2)}\otimes\cdots\otimes c_{(2)(m)}\otimes c_{(1)}\\
&= c_{(2)}\otimes c_{(3)}\otimes\cdots\otimes c_{(m+1)}\otimes c_{(1)},
\end{split}
\end{align}
which proves that the right hand side of (\ref{chw cm})  is a cyclic-symmetric tensor as well.
\end{proof}

\begin{lemma}\label{face}
For any face operator ${\rm d}_{i}$ coming from the multiplication in $M$ the elements ${\rm c}_{m}:={\rm c}_{m}(\ell)(c)$ satisfy the identities
\begin{align}\label{dc}
{\rm d}_{i}{\rm c}_{m}={\rm c}_{m-1}.
\end{align}
\end{lemma}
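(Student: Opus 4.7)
The plan is to reduce the verification to a single face operator using the cyclic symmetry already established, and then compute explicitly by unpacking the row-extension multiplication on $M$ together with the defining property of the strong connection $\ell$.

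First, by Lemma \ref{symm cm} the tensor $\mathrm{c}_m(\ell)(c)$ is invariant under the signed cyclic operator on $M^{\otimes (m+1)}$. Since all face operators $\mathrm{d}_i$ on the cyclic module of an associative algebra are cyclic conjugates of $\mathrm{d}_0$, it suffices to prove the identity for $\mathrm{d}_0$. This cuts the verification down to a single computation.

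Next, recall from Section~3 and the description (\ref{row}) that the multiplication of the row extension $M$ is given by $m\cdot m' = \varepsilon(m)\,m'$, where $\varepsilon\colon M\to B$ is the counit of the Ehresmann--Schauenburg coring, namely $\varepsilon(a\otimes a') = aa'$. Applying this to the first two factors of the formula for $\mathrm{c}_m(\ell)(c)$ from Lemma \ref{symm cm} yields
\[
(c_{(m+1)}^{\langle 2\rangle} \otimes c_{(1)}^{\langle 1\rangle})\cdot (c_{(1)}^{\langle 2\rangle} \otimes c_{(2)}^{\langle 1\rangle}) = c_{(m+1)}^{\langle 2\rangle}\, c_{(1)}^{\langle 1\rangle} c_{(1)}^{\langle 2\rangle} \otimes c_{(2)}^{\langle 1\rangle}.
\]

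The decisive input is the identity $c^{\langle 1\rangle} c^{\langle 2\rangle} = \varepsilon_C(c)\, 1_A$ satisfied by any strong connection; this follows by applying $\mathrm{id}\otimes \varepsilon_C$ to both sides of $\mathrm{can}(\tau(c)) = 1\otimes c$ and observing that the left-hand side becomes $c^{\langle 1\rangle} c^{\langle 2\rangle}$ while the right-hand side becomes $\varepsilon_C(c)\,1_A$. Substituting $c_{(1)}$ for $c$ replaces the factor $c_{(1)}^{\langle 1\rangle} c_{(1)}^{\langle 2\rangle}$ by the scalar $\varepsilon_C(c_{(1)})$. The counit axiom on $C$ then collapses the $(m{+}1)$-fold comultiplication $\Delta^{m}(c)$ into $\Delta^{m-1}(c)$, and after relabeling the Sweedler indices the resulting tensor is precisely $\mathrm{c}_{m-1}(\ell)(c)$, as displayed in Lemma \ref{symm cm} at level $m-1$.

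The only delicate part is the bookkeeping of Sweedler indices after the $\varepsilon_C$-contraction, in particular checking that the cyclic wrapping $c_{(m+1)}^{\langle 2\rangle}\otimes c_{(1)}^{\langle 1\rangle}$ at position $0$ turns into the corresponding wrapping $c_{(m)}^{\langle 2\rangle}\otimes c_{(1)}^{\langle 1\rangle}$ at level $m-1$. Since Lemma \ref{symm cm} already provides cyclic symmetry, even a cyclic rotation of the expected answer would suffice, so this final step is essentially automatic once the scalar identity $c^{\langle 1\rangle}c^{\langle 2\rangle}=\varepsilon_C(c)\,1_A$ is in hand.
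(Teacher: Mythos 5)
Your proof is correct and follows essentially the same route as the paper's: reduce to $\mathrm{d}_0$ via the cyclic symmetry of Lemma \ref{symm cm}, unpack the row-extension product $m\cdot m'=\varepsilon(m)m'$ on the first two tensor factors, invoke the translation-map identity $c^{\langle 1\rangle}c^{\langle 2\rangle}=\varepsilon_C(c)1_A$, and collapse the Sweedler indices by the counit axiom. The only difference is that you spell out the derivation of that scalar identity from $\mathrm{can}(\tau(c))=1\otimes c$, which the paper uses silently.
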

\begin{proof}
By cyclic symmetry established by Lemma \ref{symm cm} it is enough to check the desired identity only for the 0-th face operator ${\rm d}_{0}$. This goes as follows. 
\begin{align}
{\rm d}_{0}{\rm c}_{m}
&=\left(c_{(m+1)}^{\ \ \langle 2\rangle} c_{(1)}^{\ \ \langle 1\rangle}c_{(1)}^{\ \ \langle 2\rangle}\otimes c_{(2)}^{\ \ \langle 1\rangle}\right)\otimes\cdots\otimes \left(c_{(m)}^{\ \ \langle 2\rangle}\otimes c_{(m+1)}^{\ \ \langle 1\rangle}\right)\\
&=\left(c_{(m+1)}^{\ \ \langle 2\rangle}\varepsilon( c_{(1)})\otimes c_{(2)}^{\ \ \langle 1\rangle}\right)\otimes\cdots\otimes \left(c_{(m)}^{\ \ \langle 2\rangle}\otimes c_{(m+1)}^{\ \ \langle 1\rangle}\right)\\
&=\left(c_{(m+1)}^{\ \ \langle 2\rangle}\otimes \varepsilon( c_{(1)})c_{(2)}^{\ \ \langle 1\rangle}\right)\otimes\cdots\otimes \left(c_{(m)}^{\ \ \langle 2\rangle}\otimes c_{(m+1)}^{\ \ \langle 1\rangle}\right)\\
&=\left(c_{(m)}^{\ \ \langle 2\rangle}\otimes c_{(1)}^{\ \ \langle 1\rangle}\right)\otimes\cdots\otimes \left(c_{(m-1)}^{\ \ \langle 2\rangle}\otimes c_{(m)}^{\ \ \langle 1\rangle}\right)\\
&=\ {\rm c}_{m-1}.
\end{align}
\end{proof}

%Since we can consider the dummy indices $(i_{1},\ldots, i_{n+1})$  in (\ref{iterated}) as cyclically ordered, then applying to it $\ell^{n+1}$ we see that (\ref{cn}) is  cyclic symmetric  as a tensor in $(A\otimes A)^{\otimes\!\!\ n+1}$, where factors which are cyclically permuted belong to $A\otimes A$. Then, applying  Lemma \ref{lem1} to all expressions of the form 
%\begin{align}\label{four}
%\sum_{i_{j}}
%e_{i_{j-1}i_{j}}^{\ \ \ \langle 1\rangle}\otimes 
%\left( e_{i_{j-1}i_{j}}^{\ \ \ \langle 2\rangle}\otimes 
%e_{i_{j}i_{j+1}}^{\ \ \ \langle 1\rangle}\right)\otimes 
%e_{i_{j}i_{j+1}}^{\ \ \ \langle 2\rangle}
%\end{align}
%in  (\ref{cn}) proves that it  is a  cyclic symmetric  tensor in $M^{\otimes\!\!\ n+1}$. To prove (\ref{face}) it is enough to apply  Lemma \ref{lem2} to every expression  of the form  
%\begin{align}\label{six}
%\sum_{i_{j}, i_{j+1}}
%e_{i_{j-1}i_{j}}^{\ \ \ \langle 1\rangle}\otimes 
%\left( e_{i_{j-1}i_{j}}^{\ \ \ \langle 2\rangle}\otimes 
%e_{i_{j}i_{j+1}}^{\ \ \ \langle 1\rangle}\right)\otimes 
%\left(e_{i_{j}i_{j+1}}^{\ \ \ \langle 2\rangle}\otimes e_{i_{j+1}i_{j+2}}^{\ \ \ \langle 1\rangle}\right)\otimes e_{i_{j+1}i_{j+2}}^{\ \ \ \langle 2\rangle}.
%\end{align} 
%\end{proof}
\begin{theorem}[Noncommutative Chern-Weil homomorphism] For any $c\in C^{\rm tr}$ 
\begin{align}
\widetilde{\rm chw}_{n}(\ell, c):=\sum_{m=0}^{2n}(-1)^{\lfloor m/2\rfloor}\frac{m!}{\lfloor m/2\rfloor !}{\rm c}_{m}(\ell)(c)
\end{align}
is a $2n$-cycle in the total complex ${\rm Tot\ CC}_{\bullet}(M)=\bigoplus_{m=0}^{\bullet}M^{\otimes\ \!\! m+1}$ computing cyclic homology  ${\rm HC}_{\bullet}(M)$. Its homology class is stable under Connes' periodicity operator $S$. 
\end{theorem}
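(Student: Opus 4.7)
The proof plan is to recognize the statement as a direct instance of the abstract cyclic character machinery set up in Proposition \ref{abs ch}, applied to the cyclic object associated with the algebra $M$. Thus the strategy reduces to verifying that the sequence $({\rm c}_m(\ell)(c))_{m\geq 0}$ defined in Lemma \ref{chw map} belongs to $\mathcal{K}(X)$ for $X$ the cyclic object computing ${\rm HC}_\bullet(M)$, i.e.\ satisfies the two conditions \eqref{abs tc} and \eqref{abs dc}.

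First I would invoke Lemma \ref{symm cm}, which asserts that ${\rm c}_m(\ell)(c)$ is cyclic-symmetric in $M^{\otimes m+1}$. Translating this into the language of Connes' cyclic operator ${\rm t}$ on the Hochschild module $M^{\otimes m+1}$, which acts (by definition) as the cyclic shift twisted by the sign $(-1)^m$, plain cyclic symmetry of the tensor ${\rm c}_m(\ell)(c)$ is exactly condition \eqref{abs tc}: ${\rm t}({\rm c}_m(\ell)(c)) = (-1)^m {\rm c}_m(\ell)(c)$. Next, condition \eqref{abs dc} — namely ${\rm d}_i {\rm c}_m = {\rm c}_{m-1}$ for every face operator — is precisely the content of Lemma \ref{face}.

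With both compatibility conditions established, the assembled element
\[
\widetilde{\rm chw}_n(\ell, c) = \sum_{m=0}^{2n}(-1)^{\lfloor m/2\rfloor}\frac{m!}{\lfloor m/2\rfloor!}\,{\rm c}_m(\ell)(c)
\]
is literally the abstract cyclic character ${\rm ch}_n(X)$ of Proposition \ref{abs ch} evaluated on the element $({\rm c}_m(\ell)(c))_{m\geq 0} \in \mathcal{K}(X)$. Proposition \ref{abs ch} then immediately delivers both desired conclusions: that $\widetilde{\rm chw}_n(\ell, c)$ is a $2n$-cycle in ${\rm Tot\ CC}_\bullet(M)$, and that its homology class in ${\rm HC}_{2n}(M)$ is stable under Connes' periodicity operator $S$.

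In this sense there is essentially no new computational obstacle here: all the hard work has already been done in the preceding lemmas. The only step requiring minor care is reconciling sign conventions — making sure that what Lemma \ref{symm cm} calls ``cyclic-symmetric'' is indeed the $(-1)^m$-eigenspace of the Connes cyclic operator ${\rm t}$, rather than the naive cyclic invariants. Provided the cyclic object structure on $M^{\otimes \bullet+1}$ is taken in the standard Loday convention used throughout the paper (as it is, consistently with Proposition \ref{abs ch}), this is automatic, and the theorem follows.
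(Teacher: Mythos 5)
Your proof is correct and is essentially identical to the paper's own argument: the paper likewise deduces the theorem by noting that Lemma \ref{symm cm} and Lemma \ref{face} verify the two hypotheses of Proposition \ref{abs ch} for the sequence $({\rm c}_m(\ell)(c))_m$, including the same sign-convention point that cyclic symmetry yields the $(-1)^m$-eigenvalue condition for Connes' operator ${\rm t}$ in the Loday convention.
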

\begin{proof}
By Lemma \ref{symm cm} and  Lemma \ref{face} the chains ${\rm c}_{m}(\ell)(c)$ satisfy assumptions of  Proposition \ref{abs ch}, which proves the claim. 
\end{proof} 

Composing with the map induced by the algebra map $\varepsilon : M\rightarrow B$ we obtain the Chern-Weil map ${\rm chw}_{n}(\ell)$ with values in the total complex ${\rm Tot\ CC}_{\bullet}(B)$.  

\subsection{A factorization of the Chern-Galois character}

For any coalgebra coalgebra  $C$ we consider the group completion ${\rm Rep}(C)$ of the monoid of finite dimensional left $C$-comodules, which we call \emph{representations}.
 
If $V$ is a representation then given a basis $(v_{i})_{i\in I}$ of $V$ the left $C$-comodule structure $V\rightarrow C\otimes V$ is equivalent to a finite matrix $(c_{ij})_{i,j\in I}$ with entries in $C$, defined by $v_{i}\mapsto \sum_{j}c_{ij}\otimes v_{j}$ and satisfying
\begin{align}\label{comodmatr}
\Delta(c_{ik})=\sum_{j}c_{ij}\otimes c_{jk},\ \ \varepsilon(c_{ij})=\delta_{ij}.
\end{align}
It is obvious that the element 
\begin{align}\label{rep char}
\chi(V):=\sum_{i}c_{ii}
\end{align} 
is independent of the choice of the basis  and hence depends only on the isomorphism class $[V]$ of $V$. We will call it the \emph{character} of the representation $V$. By the fact that for any short exact sequence of representations 
\begin{align}
0\rightarrow V'\rightarrow V\rightarrow V''\rightarrow 0
\end{align}
one has
\begin{align}
\chi(V')+\chi(V'')=\chi(V)
\end{align}
$\chi$ factorizes through  ${\rm Rep}(C)$. By 
 the obvious symmetry property 
\begin{align}
\chi(V)_{(1)}\otimes \chi(V)_{(2)}=\sum_{i,j}c_{ij}\otimes c_{ji}=\sum_{i,j}c_{ji}\otimes c_{ij}=\chi(V)_{(2)}\otimes \chi(V)_{(1)}
\end{align}
the character  of a representation defines a map 
$$\chi: {\rm Rep}(C)\rightarrow C^{\rm tr},\ \ \ \ [V]\mapsto \chi(V).$$

For the righ-hand side we use the formula from Lemma \ref{symm cm} and the definition (\ref{rep char}) of the character of a representation to compute the composition
\begin{align}\label{cm chi}
{\rm c}_{m}(\ell)(\chi(V)):=\sum_{i_{1},\ldots, i_{m+1}}\left(c_{i_{1}i_{2}}^{\ \!\langle 2\rangle}\otimes c_{i_{2}i_{3}}^{\ \!\langle 1\rangle}\right)\otimes \left(c_{i_{2}i_{3}}^{\ \!\langle 2\rangle}\otimes c_{i_{3}i_{4}}^{\ \!\langle 1\rangle}\right)\otimes\cdots\otimes \left(c_{i_{m+1}i_{1}}^{ \ \! \langle 2\rangle}\otimes c_{i_{1}i_{2}}^{\ \!\langle 1\rangle}\right)
\end{align}
which after applying the map induced by the algebra map $\varepsilon : M\rightarrow B$ is sent to
\begin{align}\label{rhs cm chi}
\sum_{i_{1},\ldots, i_{m+1}}c_{i_{1}i_{2}}^{\ \!\langle 2\rangle} c_{i_{2}i_{3}}^{\ \!\langle 1\rangle}\otimes c_{i_{2}i_{3}}^{\ \!\langle 2\rangle} c_{i_{3}i_{4}}^{\ \!\langle 1\rangle}\otimes\cdots\otimes c_{i_{m+1}i_{1}}^{ \ \! \langle 2\rangle} c_{i_{1}i_{2}}^{\ \!\langle 1\rangle}.
\end{align}
The latter is equal to an expression appearing in the definition of the Chern-Galois character in \cite{bh04}.

\begin{corollary}\label{decomp}
The Chern-Galois character decomposes as the diagonal composition in the following commutative diagram
 $$\xymatrixcolsep{5em}\xymatrix{
{\rm Rep}(C) \ar[d]_{\chi} \ar[r]^-{[A\Box^{C}-]} &{\rm K}_{0}(B)\ar[d]^{{\rm ch}_{n}}\\
C^{\rm tr} \ar[r]^-{{\rm chw}_{n}} &{\rm HC}_{2n}(B).}
$$
\end{corollary}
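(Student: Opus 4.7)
The plan is to verify commutativity of the square by chasing a finite-dimensional $C$-comodule $V$ along both routes and identifying the two resulting $2n$-cycles in ${\rm Tot\ CC}_{\bullet}(B)$. The strategy exploits the fact that the bulk of the work has already been packaged into Lemma \ref{symm cm} together with the displayed formulas (\ref{cm chi}) and (\ref{rhs cm chi}).

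First, I would compute the downward-then-rightward composite. Fix matrix coefficients $(c_{ij})$ for $V$ satisfying (\ref{comodmatr}); then $\chi([V]) = \sum_i c_{ii}$ lies in $C^{\rm tr}$, as checked in the discussion preceding Corollary \ref{decomp}. Substituting $c = \chi(V)$ into the cotrace formula of Lemma \ref{symm cm} for ${\rm c}_m(\ell)$, and then applying the algebra map $\varepsilon \colon M \to B$ entry-wise, produces the expression (\ref{rhs cm chi}). Alternating these chains with the coefficients $(-1)^{\lfloor m/2 \rfloor} m!/\lfloor m/2 \rfloor!$ for $m = 0, \ldots, 2n$ gives the $2n$-cycle representing $({\rm chw}_n \circ \chi)[V]$ in ${\rm HC}_{2n}(B)$.

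Next, I would compute the rightward-then-downward composite. The associated finitely generated projective $B$-module $A \Box^C V$ is represented, via the strong connection $\ell$, by an explicit idempotent matrix $\mathbf{e}$ whose entries are of the form $c_{ij}^{\,\langle 1\rangle} c_{jk}^{\,\langle 2\rangle}$. Its Chern character ${\rm ch}_n$ is given, as reviewed after Proposition \ref{abs ch}, by applying the trace formula ${\rm tr}_n$ to the cycle $\widetilde{\rm ch}_n(\mathbf{e}) = \sum_m (-1)^{\lfloor m/2 \rfloor}(m!/\lfloor m/2 \rfloor!)\, {\rm c}_m(\mathbf{e})$. The reader is pointed in the paragraph preceding the corollary to the fact that the result of this calculation is precisely the expression (\ref{rhs cm chi}), i.e.\ the Chern-Galois formula of \cite{bh04}. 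So both composites produce the same $2n$-cycle, and the diagonal of the square is, by definition, the Chern-Galois character.

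The main obstacle is the combinatorial identification carried out in the second step: matching, index by index, the iterated trace of $\mathbf{e}^{\otimes (m+1)}$ with the cotrace-driven expression (\ref{cm chi}) after $\varepsilon$ has been applied. This is the content of the Chern-Galois construction in \cite{bh04}, and the work in Lemmas \ref{symm cm} and \ref{face} is precisely what makes the translation from the idempotent picture to the cotrace picture a purely formal consequence of the strong-connection identity $c^{\,\langle 1\rangle}c^{\,\langle 2\rangle} = \varepsilon(c)$ and cyclic symmetry; once that match is spelled out, both triangles in the diagram commute tautologically.
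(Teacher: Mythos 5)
Your proposal is correct and follows essentially the same route as the paper: one computes ${\rm chw}_n\circ\chi$ on a comodule $V$ via Lemma \ref{symm cm}, lands on the expression (\ref{rhs cm chi}) after applying $\varepsilon$, recognizes it as the Chern--Galois formula of \cite{bh04}, and invokes the main theorem of \cite{bh04} (that this formula computes ${\rm ch}_n$ of the associated module $A\Box^C V$) for the other composite. The paper leaves the second half entirely to the citation, so your extra remarks on the idempotent picture only make explicit what is already being imported from \cite{bh04}.
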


Besides Corollary~\ref{decomp}, there is another relation between the Chern-Weil map and the  Chern character. It consists in the role played by \
nonunital  $H$-unital block-matrix algebra extensions of the algebra $B$ (even if $B$ is unital and commutative), in defining these maps. 

For the Chern-Weil map, in analogy with the algebra ${\rm M}_{\infty}(B)$ for the  Chern character,  it is the Ehresmann-Schauenburg quantum groupoid (in the Hopf-Galois case) or the Ehresmann-Schauenburg coring 
(in the coalgebra-Galois case)  $M=A\Box^{C}A$ with its multiplication defined by its counit $\varepsilon$. 

%\begin{lemma}\label{lem1}
%Assume that $B\subseteq A$ is a principal Hopf-Galois extension  with a strong connection
%\begin{align}
%\ell: H\rightarrow A\otimes A,\ \ \ h\mapsto h^{\langle 1\rangle}\otimes h^{\langle 2\rangle},
%\end{align}
%where implicit summation on the right hand side is understood, and $M=A\Box^{H}A$. Then, for any $h\in H$ 
%\begin{align}
%{h_{(1)}}^{\!\!\langle 1\rangle}\otimes {h_{(1)}}^{\!\!\langle 2\rangle}\otimes {h_{(2)}}^{\!\!\langle 1\rangle}\otimes {h_{(2)}}^{\!\!\langle 2\rangle}\in A\otimes M\otimes A.
%\end{align}
%\end{lemma}
%\begin{proof}
%Te left hand side can be obtained as the image of $h\in H$ under the canonical composite map
%\begin{align}
%H\stackrel{\Delta}{\longrightarrow}H\ \Box^{H}H\stackrel{\ell\ \Box^{H}\ell}{-\!\!\!-\!\!\!-\!\!\!\longrightarrow}(A\otimes A)\Box^{H}(A\otimes A) =   A\otimes (A\Box^{H}A)\otimes  A.
%\end{align}

\subsection{Independence of the choice of a strong connection}

The fundamental property of the classical Chern-Weil homomorphism is its independence of the choice of a connection. 
As we do not know how to reproduce the classical argument in the noncommutative context, herein we use the independence of
the Chern-Galois character of the choice of a strong connection to argue such independence for the noncommutative Chern-Weil homomorphism.

We will say that $C$ has \emph{enough characters}, if $C^{\rm tr}$ is linearly spanned by characters of representations. Note that the algebra of 
class functions on a semi-simple connected algebraic group has a linear basis consisting of characters of irreducible rational representations 
\cite[3.2]{humph}. The same is true for finite groups. This motivates our terminology. 
\begin{proposition}
If $C$ has enough characters, the Chern-Weil map ${\rm chw}(\ell)$ is independent of the choice of the strong connection $\ell$.
\end{proposition}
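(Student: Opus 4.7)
The plan is to leverage the factorization of the Chern-Galois character established in Corollary~\ref{decomp}, together with the classical fact from \cite{bh04} that the Chern-Galois character does not depend on the choice of strong connection.

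First I would observe that for any representation $V \in {\rm Rep}(C)$ with character $\chi(V) \in C^{\rm tr}$, Corollary~\ref{decomp} yields the equality
\[
{\rm chw}_{n}(\ell)\bigl(\chi(V)\bigr) \;=\; {\rm ch}_{n}\bigl([A\,\Box^{C}\!V]\bigr)
\]
in ${\rm HC}_{2n}(B)$. The right-hand side depends only on the isomorphism class of the associated finitely generated projective $B$-module $A\,\Box^{C}\!V$, and the construction of $A\,\Box^{C}\!V$ itself does not involve $\ell$. Hence ${\rm chw}_{n}(\ell)\bigl(\chi(V)\bigr)$ is independent of $\ell$ for every representation $V$.

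Next I would invoke the fact that ${\rm chw}_{n}(\ell) : C^{\rm tr} \to {\rm HC}_{2n}(B)$ is a linear map, which is immediate from the construction of $\widetilde{\rm chw}_{n}(\ell, c)$ as a finite linear combination of expressions of the form ${\rm c}_{m}(\ell)(c)$, each of which depends linearly on $c \in C^{\rm tr}$ through the iterated comultiplication $\Delta^{m}$ followed by the $C$-bicolinear map $\ell^{\otimes (m+1)}$. Composing with the algebra map $\varepsilon : M \to B$ preserves linearity.

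Now the ``enough characters'' hypothesis enters. By assumption, the subspace of $C^{\rm tr}$ spanned by $\{\chi(V) : V \in {\rm Rep}(C)\}$ coincides with $C^{\rm tr}$. Given two strong connections $\ell$ and $\ell'$, the linear map ${\rm chw}_{n}(\ell) - {\rm chw}_{n}(\ell') : C^{\rm tr} \to {\rm HC}_{2n}(B)$ vanishes on every character $\chi(V)$ by the previous step, hence vanishes on their linear span, hence vanishes identically on $C^{\rm tr}$. This gives ${\rm chw}_{n}(\ell) = {\rm chw}_{n}(\ell')$, as required. No serious obstacle appears in this argument; the entire content is the conjunction of Corollary~\ref{decomp}, the classical connection-independence of the Chern-Galois character, and the spanning hypothesis.
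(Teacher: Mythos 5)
Your proposal is correct and follows essentially the same route as the paper: both arguments combine the factorization of Corollary~\ref{decomp}, the connection-independence of the Chern-Galois character from \cite{bh04}, and the spanning hypothesis on characters. Your version merely makes explicit the linearity of ${\rm chw}_{n}(\ell)$ in $c$, which the paper leaves implicit.
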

\begin{proof}
By the results of \cite{bh04} the Chern-Galois character of a representation $V$ computes the Chern character of a  finitely generated projective 
$B$-module $A\Box^{C}V$ associated through a given representation $V$ with a $C$-Galois extension $B\subseteq A$, and hence the Chern Galois 
character is independent of the choice of the strong connection $\ell$.  In view of Theorem \ref{decomp}, assuming that  $C^{\rm tr}$ is linearly 
spanned by characters of representations, ${\rm chw}(\ell)$ is independent of the choice of the strong connection $\ell$ as well. 
\end{proof}

\section*{Acknowledgements}

The authors are very grateful to Gabriella B\"ohm for her initial work on this paper, to Kaveh Mousavand for his help with calculational experiments,
and to Atabey Kaygun for sharing his insight on Wodzicki's excision.
It is also a pleasure to thank Alexander Gorokhovsky,  Masoud Khalkhali, Ryszard Nest and Hugh Thomas for discussions.
This work was partially supported by NCN grant UMO-2015/19/B/ST1/03098.

\end{document}